\newcommand{\multiline}[1]{%
  \begin{tabularx}{\dimexpr\linewidth-\ALG@thistlm}[t]{@{}X@{}}
    #1
  \end{tabularx}
}
\newcommand{\norm}[1]{\left\|#1\right\|}
\newcommand{\ltn}[1]{\norm{#1}_{L^2(\Omega)}}
\newcommand{\operator}[1]{\mathsf{#1}}
\newcommand{\A}{\operator{A}}
\newcommand{\E}{\operator{E}}
\newcommand{\F}{\operator{F}}
\newcommand{\uex}{u_{X,\epsilon}^\star}
\newcommand{\dd}{\mathsf{d}}
\newcommand{\dt}{\,\mathsf{d} t}
\newcommand{\omegamin}{\Omega_{\epsilon,-}}
\newcommand{\omegaplus}{\Omega_{\epsilon,+}}
\newcommand{\x}{\bm{\mathsf{x}}}
\newcommand{\epsm}{\epsilon_{-}}
\newcommand{\epsp}{\epsilon_{+}}
\newcommand{\xim}{\xi'_{-}}
\newcommand{\xiem}{\xi'_{\epsilon,-}}
\newcommand{\xiep}{\xi'_{\epsilon,+}}
\newcommand{\un}{u_N^\star}
\newcommand{\re}{r}
\newcommand{\se}{s}
\newcommand{\une}{u_{N,\epsilon}^\star}
\newcommand{\hs}{H_0^1(\Omega)}
\newcommand{\wop}{W_0^{1,p(\cdot)}(\Omega)}
\newcommand{\lps}{L^{p(\cdot)}(\Omega)}
\newcommand{\muem}{\mu_{\epsilon,-}}
\newcommand{\muep}{\mu_{\epsilon,+}}
\newcommand{\dx}{\,\mathsf{d} x}
\newcommand{\bkappa}{\boldsymbol{\kappa}}
\newcommand{\btau}{\boldsymbol{\tau}}
\newcommand{\dprod}[1]{\left<#1\right>}
\DeclareMathOperator{\Div}{div}
\DeclareMathOperator*{\esssup}{ess\,sup}
\newcommand{\nnnn}[1]{\norm{#1}_{L^2(\Omega)}}
\newcommand{\wps}{W_0^{1,p(\cdot)}(\Omega)}
\DeclareMathOperator*{\argmin}{arg\,min}
\newtheorem{theorem}{Theorem}[section]
\newtheorem{lemma}[theorem]{Lemma}
\newtheorem{proposition}[theorem]{Proposition}
\theoremstyle{definition}
\newtheorem{remark}[theorem]{Remark}
\newtheorem{experiment}{Experiment}[section]
\title[A relaxed, damped Ka\v{c}anov scheme for the $p(x)$-Poisson equation]{A damped Ka\v{c}anov scheme for the numerical solution of a relaxed $p(x)$-Poisson equation}
\author{Pascal Heid}
\email{pascal.heid@ma.tum.de}
\address{Department of Mathematics, Technical University of Munich, Boltzmannstr.~3, 85748 Garching bei München, Germany \newline Munich Center for Machine Learning (MCML)}
\keywords{%
$p(x)$-Poisson equation,
damped Ka\v{c}anov scheme, 
relaxation method}
\subjclass[2010]{35J05, 47J25, 65N30}
\begin{document}

\begin{abstract}
The focus of the present work is the (theoretical) approximation of a solution of the $p(x)$-Poisson equation. To devise an iterative solver with guaranteed convergence, we will consider a relaxation of the original problem in terms of a truncation of the nonlinearity from below and from above by using a pair of positive cut-off parameters. We will then verify that, for any such pair, a damped Ka\v{c}anov scheme generates a sequence converging to a solution of the relaxed equation. Subsequently, it will be shown that the solutions of the relaxed problems converge to the solution of the original problem in the discrete setting. Finally, the discrete solutions of the unrelaxed problem converge to the continuous solution. Our work will finally be rounded up with some numerical experiments that underline the analytical findings. 
\end{abstract}

\maketitle

\section{Introduction}

In this work, we consider the $p(x)$-Laplace problem
\begin{equation}\label{eq:pxpoisson}
\begin{aligned}
-\Div(|\nabla u|^{p(x)-2}\nabla u)&=f(x,u) \quad && x\in \Omega,\\
u(x)&=0 \quad && x\in \partial \Omega,
\end{aligned}
\end{equation}
where $\Omega \subset \mathbb{R}^d$, $d \in \{2,3\}$, is an open and bounded domain with Lipschitz boundary. We emphasise that problems of the form~\eqref{eq:pxpoisson} have been widely applied, for instance in image processing~\cite{CLS:2004,CLR:2006,Tiirola:2014,YiGe:2017,KARAMI2018534,ZSGW:2019,DKKM:2022}, electrorheological fluids~\cite{RAJAGOPAL1996401,MR1810360,diening2002theoretical}, or magnetostatics~\cite{CEKIC2012838}. The weak formulation of~\eqref{eq:pxpoisson} is given as follows: Find $u \in W_0^{1,p(\cdot)}(\Omega)$ such that 
\begin{equation}\label{eq:weakpx}
\int_\Omega |\nabla u|^{p(x)-2} \nabla u \cdot \nabla v \dx=\int f(x,u)v \dx \qquad \text{for all} \ v \in W_0^{1,p(\cdot)}(\Omega),
\end{equation}
where, in this work,
\begin{align}\label{eq:cplus}
p \in C_{+}(\overline{\Omega}):=\left\{f \in C(\overline{\Omega}): \inf_{x \in \overline{\Omega}}f(x) > 1\right\};
\end{align}
we note that $p_{-} = \inf_{x \in \overline{\Omega}} p(x)>1$ and $p_{+}=\sup_{x \in \overline{\Omega}} p(x)<\infty$. 
It is known that problem~\eqref{eq:weakpx} may have multiple, even infinitely many, solutions. For the context of our analysis, we want to restrict to the case where the solution is unique. Indeed,  a solution of~\eqref{eq:weakpx} exists and is unique if the source function $f$ in~\eqref{eq:weakpx} satisfies the assumption ($A_\alpha$) below; we refer to~\cite[Thm.~4.2]{FanZhang:03} for a proof of the statement.
\begin{itemize}
\item[($A_\alpha$)] It holds that $f(x,u)=f(x) \in L^{\alpha(\cdot)}(\Omega)$ with $\alpha \in C_{+}(\overline{\Omega})$, cf.~\eqref{eq:cplus}, and $\nicefrac{1}{\alpha(x)}+\nicefrac{1}{p^\star(x)}<1$ for all $x \in \Omega$; here,
\begin{align} \label{eq:pstar}
p^\star(x)=\begin{cases} 
\nicefrac{d p(x)}{d-p(x)} & p(x) < d,\\
\infty & p(x) \geq d.
\end{cases}
\end{align}
\end{itemize}
Furthermore, upon defining the (energy) functional $\E:\wps \to \mathbb{R}$ by
\begin{align} \label{eq:penergy}
\E(u)=\int_\Omega \frac{1}{p(x)} |\nabla u|^{p(x)} \dx-\int_\Omega f u \dx, \qquad u \in \wps,
\end{align}
we have that the unique solution $u^\star \in \wps$ of~\eqref{eq:weakpx} is, equivalently, the unique minimiser of $\E$; i.e.,
\begin{align} \label{eq:pxopt}
u^\star=\argmin_{u \in \wps}\E(u).
\end{align}
In particular, our original (weak) equation~\eqref{eq:weakpx} is the Euler--Lagrange equation of the optimisation problem~\eqref{eq:pxopt}. Solving for $u^\star$, either by minimising the energy functional~\eqref{eq:penergy} or by considering the $p(x)$-Poisson problem~\eqref{eq:weakpx}, is a highly challenging problem. For $1<p(x) \equiv p<2$ and $2 \leq p(x) \equiv p$, respectively, a relaxed problem was introduced in~\cite{Diening:2020} and~\cite{BalciDieningStorn:2022}, respectively, which can be iteratively solved by the Ka\v{c}anov scheme. In those references it is further shown that the unique solution of the relaxed problem converges, in a certain sense, to the solution of the original problem. However, the problem considered in this work is tremendously more challenging, since we allow for a variable exponent $p(\cdot)$, which may take values below and above the threshold value $p=2$. For that reason, we will settle for some weaker convergence results, as outlined below. In particular, we will prove that a damped Ka\v{c}anov iteration scheme converges to the unique solution of the relaxed problem. Subsequently, we will show in the discrete setting that the unique solutions of the relaxed problem converges to the unique solution of unrelaxed $p(x)$-Poisson problem. Finally, we will verify the convergence of the discrete (unrelaxed) solution to the unique solution of the continuous problem.

\subsubsection*{Outline} In Section 2 we present the necessary notation, recall some well-known results, introduce the relaxed problem and provide some preliminary results, which will be crucial in the analysis in the later sections. The third section deals with the damped Ka\v{c}anov iteration scheme for the solution of the relaxed problem. Subsequently, in Section 4, the convergence of the solution of the relaxed problem to the one of the unrelaxed problem is verified in the discrete setting. In addition, we prove the convergence of the discrete solution to the continuous solution. Some numerical experiments are then performed in Section 5, before our work is concluded with some final remarks in Section 6.

\section{Preliminaries}

Throughout our work, we will assume that $\Omega \subset \mathbb{R}^d$, $d \in \{2,3\}$, is an open and bounded domain with Lipschitz boundary.

\subsection{Basic notions}

In the given work we will consider Lebesgue and Sobolev spaces (with variable exponents). As usual, for any $p \in [1,\infty)$ we denote by $L^p(\Omega)$ the Lebesgue space of $p$-integrable functions with corresponding norm $\norm{f}_{L^p(\Omega)}:=\left(\int_\Omega |f(x)|^p \dx\right)^{\nicefrac{1}{p}}$.  Furthermore, $L^\infty(\Omega)$ signifies the Lebesgue space of essentially bounded, measurable functions on $\Omega$ endowed with the norm $\norm{f}_{L^\infty(\Omega)}:=\esssup_{x \in \Omega} |f(x)|$. Likewise, for $p \in [1,\infty]$, we denote by $W_0^{1,p}(\Omega)$ the space of Sobolev functions with zero trace along the boundary $\partial \Omega$, endowed with the norm $\norm{f}_{W_0^{1,p}(\Omega)}:=\norm{\nabla f}_{L^p(\Omega)}$. As usual, for $p=2$, we use the convention $H^1_0(\Omega):=W_0^{1,2}(\Omega)$.

Next, we will introduce the Lebesgue and Sobolev spaces with variable exponents; for a very extensive treatment of those spaces we refer the interested reader to the monograph~\cite{DieningEtAl:2011}. For any given measurable function $p:\Omega \to [1,\infty]$ we introduce the Lebesgue space with variable exponent
\begin{align*} 
L^{p(\cdot)}(\Omega):=\left\{f:\Omega \to \mathbb{R} \ \text{measurable}:\int_\Omega |f(x)|^{p(x)} \dx < \infty\right\},
\end{align*}
endowed with the Luxemburg norm
\begin{align} \label{eq:pxnorm}
\norm{f}_{L^{p(\cdot)}(\Omega)}:=\inf \left\{\lambda >0: \int_\Omega \left|\frac{f(x)}{\lambda}\right|^{p(x)} \dx \leq 1\right\}.
\end{align}
We emphasise that $(L^{p(\cdot)}(\Omega),\norm{\cdot}_{L^{p(\cdot)}(\Omega)})$ is a separable and reflexive Banach space, see, e.g., \cite{FAN2001424}. Furthermore, for a constant exponent $p$ the definition of the Luxemburg norm in~\eqref{eq:pxnorm} coincides with the usual $L^p(\Omega)$-norm. 
In a similar manner we define the Sobolev space with variable exponent
\begin{align*} 
W^{1,p(\cdot)}(\Omega):=\left \{f \in L^{p(\cdot)}(\Omega): \nabla f \in L^{p(\cdot)}(\Omega) \right\},
\end{align*}
equipped with the norm
\begin{align*}
\norm{f}_{W^{1,p(\cdot)}(\Omega)}:=\norm{\nabla f}_{\lps}+\norm{f}_{\lps}.
\end{align*}
Moreover, for $p \in C_{+}(\overline{\Omega})$, we further consider the Sobolev space with variable exponent and zero boundary values $\wop$, which is the closure of $C_0^\infty(\Omega)$ in $W^{1,p(\cdot)}(\Omega)$; this spaces will be endowed with the norm
\begin{align*}
\norm{f}_{\wop}:=\norm{\nabla f}_{\lps}.
\end{align*}
Those are again separable and reflexive Banach spaces, see~\cite[Sec.~8]{DieningEtAl:2011}. 

\subsection{Auxiliary results}
We shall now state some preliminary results concerning Lebesgue and Sobolev spaces with variable exponents that are well-known in the literature.

\begin{proposition} \label{eq:embedding}
Let $r,s \in C_{+}(\overline{\Omega})$. If $r \leq s$ almost everywhere, then we have the continuous embeddings $L^{s(\cdot)}(\Omega) \hookrightarrow L^{r(\cdot)}(\Omega)$ and $W^{1,s(\cdot)}_0(\Omega) \hookrightarrow W^{1,r(\cdot)}_0(\Omega)$. 
\end{proposition}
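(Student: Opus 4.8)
The plan is to reduce everything to the scalar comparison inequality for the modular and then invoke the characterisation of the Luxemburg norm. The key elementary fact is: if $t \geq 0$ and $1 \leq r(x) \leq s(x)$, then $t^{r(x)} \leq 1 + t^{s(x)}$ (split into the cases $t \leq 1$ and $t > 1$). More precisely, for the embedding $L^{s(\cdot)}(\Omega) \hookrightarrow L^{r(\cdot)}(\Omega)$, I would first show the set inclusion $L^{s(\cdot)}(\Omega) \subseteq L^{r(\cdot)}(\Omega)$: given $f \in L^{s(\cdot)}(\Omega)$, apply the pointwise bound to $t = |f(x)|$ and integrate to get $\int_\Omega |f(x)|^{r(x)}\dx \leq |\Omega| + \int_\Omega |f(x)|^{s(x)}\dx < \infty$, using that $\Omega$ is bounded.

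For the norm estimate, I would argue via the modular. Fix $f \in L^{s(\cdot)}(\Omega)$ with $\lambda := \norm{f}_{L^{s(\cdot)}(\Omega)} > 0$, so that (by the standard unit-ball property of the Luxemburg norm, together with $s \in C_+(\overline\Omega)$ ensuring $s_+ < \infty$) one has $\int_\Omega |f(x)/\lambda|^{s(x)}\dx \leq 1$. Applying the scalar inequality with $t = |f(x)|/\lambda$ gives $\int_\Omega |f(x)/\lambda|^{r(x)}\dx \leq |\Omega| + 1$. Rescaling by a constant $c = c(|\Omega|, r_-, r_+) \geq 1$, i.e.\ replacing $\lambda$ by $c\lambda$ and using $|f(x)/(c\lambda)|^{r(x)} \leq c^{-r_-}|f(x)/\lambda|^{r(x)}$, one obtains $\int_\Omega |f(x)/(c\lambda)|^{r(x)}\dx \leq c^{-r_-}(|\Omega|+1) \leq 1$ for $c$ chosen large enough, whence $\norm{f}_{L^{r(\cdot)}(\Omega)} \leq c\lambda = c\norm{f}_{L^{s(\cdot)}(\Omega)}$. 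This establishes the continuous embedding.

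The Sobolev embedding $W^{1,s(\cdot)}_0(\Omega) \hookrightarrow W^{1,r(\cdot)}_0(\Omega)$ then follows immediately: if $f \in W^{1,s(\cdot)}_0(\Omega)$, then $f$ is a limit in $W^{1,s(\cdot)}(\Omega)$ of functions in $C_0^\infty(\Omega)$, and the $L^{s(\cdot)}$-to-$L^{r(\cdot)}$ bound applied to $\nabla f$ (and to $f$ itself) shows this sequence is also Cauchy in $W^{1,r(\cdot)}(\Omega)$, so its limit lies in $W^{1,r(\cdot)}_0(\Omega)$; moreover $\norm{f}_{W^{1,r(\cdot)}_0(\Omega)} = \norm{\nabla f}_{L^{r(\cdot)}(\Omega)} \leq c\norm{\nabla f}_{L^{s(\cdot)}(\Omega)} = c\norm{f}_{W^{1,s(\cdot)}_0(\Omega)}$.

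The only mild subtlety — the ``hard part,'' though it is routine — is the passage from the modular inequality to the norm inequality, since the Luxemburg norm is not linear in the modular; this is handled by the rescaling argument above, which crucially uses the boundedness of $\Omega$ (so $|\Omega| < \infty$) and the fact that $r_- > 1$ (so the exponent $c^{-r_-}$ genuinely shrinks). In fact this result is entirely standard and appears in \cite{DieningEtAl:2011} and \cite{FAN2001424}; I would likely just cite it, but the sketch above indicates the self-contained argument.
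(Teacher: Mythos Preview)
Your proposal is correct, and in fact the paper does not give its own proof of this proposition at all: it simply states the result and refers the reader to \cite[Thm.~1.11]{FAN2001424} and \cite[Sec.~3.3]{DieningEtAl:2011}, precisely as you yourself suggest doing in your final sentence. Your self-contained modular-to-norm sketch is sound and standard; the only minor imprecision is that the rescaling step needs only $r_- > 0$ (so that $c^{-r_-} \to 0$ as $c \to \infty$), not $r_- > 1$, but since $r \in C_+(\overline\Omega)$ already forces $r_- > 1$ this is immaterial.
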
 

We refer, for instance, to~\cite[Thm.~1.11]{FAN2001424} or~\cite[Sec.~3.3]{DieningEtAl:2011}. Moreover, we also have a H\"{o}lder inequality for Lebesgue spaces with variable exponents; we refer to~\cite[Lem.~3.2.20]{DieningEtAl:2011}.

\begin{proposition} \label{eq:holder}
Let $q,r,s:\Omega \to [1,\infty]$ measurable with 
\begin{align*}
\frac{1}{q(x)}+\frac{1}{r(x)}=\frac{1}{s(x)} \qquad \text{for almost every} \ x \in \Omega.
\end{align*}
Then, for all $f \in L^{q(\cdot)}(\Omega)$ and $g \in L^{r(\cdot)}(\Omega)$ we have that $fg \in L^{s(\cdot)}(\Omega)$ with
\begin{align*}
\norm{fg}_{L^{s(\cdot)}(\Omega)} \leq 2 \norm{f}_{L^{q(\cdot)}(\Omega)} \norm{g}_{L^{r(\cdot)}(\Omega)}.
\end{align*}
Especially, for measurable $p:\Omega \to [1,\infty]$, denote by $p'$ its H\"{o}lder conjugate; i.e.
\begin{align*} 
\frac{1}{p(x)}+\frac{1}{p'(x)}=1 \qquad \text{for almost every} \ x \in \Omega.
\end{align*}
Then, for any $f \in L^{p(\cdot)}(\Omega)$ and $g \in L^{p'(\cdot)}(\Omega)$, we have that $fg \in L^1(\Omega)$ with
\begin{align*} 
\norm{fg}_{L^1(\Omega)} \leq 2 \norm{f}_{L^{p(\cdot)}(\Omega)}\norm{g}_{L^{p'(\cdot)}(\Omega)}.
\end{align*}
\end{proposition}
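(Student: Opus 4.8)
The plan is to deduce the inequality from the elementary (constant-exponent) Young inequality by passing through the \emph{modular}. For a measurable exponent $t:\Omega\to[1,\infty]$ write $\varrho_{t(\cdot)}(h):=\int_\Omega |h(x)|^{t(x)}\dx$ (with the usual convention on the set $\{t=\infty\}$), and recall the unit-ball property of the Luxemburg norm: $\norm{h}_{L^{t(\cdot)}(\Omega)}\le 1$ implies $\varrho_{t(\cdot)}(h)\le 1$. I would first dispose of the degenerate cases $\norm{f}_{L^{q(\cdot)}(\Omega)}=0$ or $\norm{g}_{L^{r(\cdot)}(\Omega)}=0$, in which $f$ or $g$ vanishes almost everywhere and there is nothing to prove; then, by homogeneity of the norm, I normalise so that $\norm{f}_{L^{q(\cdot)}(\Omega)}=\norm{g}_{L^{r(\cdot)}(\Omega)}=1$, whence $\varrho_{q(\cdot)}(f)\le 1$ and $\varrho_{r(\cdot)}(g)\le 1$. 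Since $\norm{fg}_{L^{s(\cdot)}(\Omega)}\le 2$ is equivalent to $\varrho_{s(\cdot)}(\tfrac12 fg)\le 1$, it suffices to establish the latter.

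The key observation is that, pointwise on the set where all three exponents are finite, $\tfrac{q(x)}{s(x)}$ and $\tfrac{r(x)}{s(x)}$ form a pair of Hölder-conjugate exponents, because $\tfrac{s(x)}{q(x)}+\tfrac{s(x)}{r(x)}=1$ by assumption; moreover the hypothesis $\tfrac1{q}+\tfrac1{r}=\tfrac1{s}$ forces $s\le q$ and $s\le r$, so $\tfrac{s(x)}{q(x)},\tfrac{s(x)}{r(x)}\le 1$. Applying the classical Young inequality with the exponents $\tfrac{q(x)}{s(x)},\tfrac{r(x)}{s(x)}$ to $a=|f(x)|^{s(x)}$ and $b=|g(x)|^{s(x)}$ gives the pointwise bound
\[
|f(x)g(x)|^{s(x)} \le \frac{s(x)}{q(x)}|f(x)|^{q(x)} + \frac{s(x)}{r(x)}|g(x)|^{r(x)} \le |f(x)|^{q(x)} + |g(x)|^{r(x)}.
\]
Since $s(x)\ge 1$, one also has $\big|\tfrac12 f(x)g(x)\big|^{s(x)} = 2^{-s(x)}|f(x)g(x)|^{s(x)} \le \tfrac12 |f(x)g(x)|^{s(x)}$, and integrating over $\Omega$ yields
\[
\varrho_{s(\cdot)}\!\big(\tfrac12 fg\big) \le \tfrac12\big(\varrho_{q(\cdot)}(f) + \varrho_{r(\cdot)}(g)\big) \le 1,
\]
which is exactly what was needed; the stated special case is then just the choice $s\equiv 1$, $q=p$, $r=p'$.

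The main point requiring care is the treatment of the exceptional sets $\{q=\infty\}$, $\{r=\infty\}$ and $\{s=\infty\}$, on which the Young inequality above is not directly available. There the constraint $\tfrac1{q}+\tfrac1{r}=\tfrac1{s}$ degenerates — for instance on $\{q=\infty\}$ one has $s=r$ and the product is controlled by $\esssup|f|$ over that set — so this piece must be estimated separately using the corresponding part of the modular and then patched to the finite part. This bookkeeping, together with the precise form of the modular--norm relations, is the only genuinely technical ingredient; everything else reduces to the pointwise Young inequality and the monotonicity $s(x)\ge 1$, and it is this passage through the modular (rather than any loss on the exceptional sets) that accounts for the factor $2$ in the stated estimate.
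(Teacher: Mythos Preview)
The paper does not prove this proposition at all; it merely states it and cites \cite[Lem.~3.2.20]{DieningEtAl:2011}. So there is no ``paper's own proof'' to compare against. Your argument --- normalise via homogeneity, use the unit-ball property $\norm{h}\le 1\Rightarrow\varrho(h)\le 1$, apply Young's inequality pointwise with the conjugate pair $q(x)/s(x),\,r(x)/s(x)$, and exploit $s(x)\ge 1$ to absorb the factor $\tfrac12$ into the modular --- is exactly the standard proof and is, in fact, essentially the one given in the cited reference. The only loose end you flag yourself: the sets $\{q=\infty\}$, $\{r=\infty\}$, $\{s=\infty\}$ require the usual separate bookkeeping with the $L^\infty$-part of the modular, but this is routine and does not affect the constant.
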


The next result states a property that is equivalent to the convergence in $L^{p(\cdot)}(\Omega)$, see~\cite[Thm~1.4]{FAN2001424}.

\begin{proposition} \label{prop:equiv}
Let $f_k,f \in L^{p(\cdot)}(\Omega)$, $k \in \mathbb{N}$, and $p \in C_{+}(\overline{\Omega})$, cf.~\eqref{eq:cplus}. Then, the following statements are equivalent:
\begin{enumerate}[(a)]
\item $\lim_{k \to \infty} \norm{u_k-u}_{L^{p(\cdot)}(\Omega)}=0$;
\item $u_k$ converges in measure to $u$ and $\lim_{k \to \infty} \norm{u_k}_{L^{p(\cdot)}(\Omega)}=\norm{u}_{L^{p(\cdot)}(\Omega)}$.
\end{enumerate}
\end{proposition}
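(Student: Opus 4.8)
The plan is to route everything through the \emph{modular} $\rho(g):=\int_\Omega|g(x)|^{p(x)}\dx$ and the standard modular--norm relations, which are available because $p_+<\infty$ (see, e.g., \cite[Sec.~3.2--3.4]{DieningEtAl:2011} or \cite{FAN2001424}): one has $\min\big(\norm{g}_{\lps}^{p_-},\norm{g}_{\lps}^{p_+}\big)\le\rho(g)\le\max\big(\norm{g}_{\lps}^{p_-},\norm{g}_{\lps}^{p_+}\big)$ for every $g\in\lps$, whence $\norm{g_k}_{\lps}\to 0\iff\rho(g_k)\to 0$; and $\rho\big(g/\norm{g}_{\lps}\big)=1$ whenever $0<\norm{g}_{\lps}<\infty$. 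Adopting the notation $u_k,u$ of statements (a) and (b), I would establish the two implications separately.

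\textbf{(a) $\Rightarrow$ (b).} From $\norm{u_k-u}_{\lps}\to 0$ and the comparison above, $\rho(u_k-u)\to 0$. For fixed $\sigma>0$ one has $|u_k-u|^{p(x)}\ge\min(\sigma^{p_-},\sigma^{p_+})$ on the set $\{x:|u_k(x)-u(x)|\ge\sigma\}$, so Chebyshev's inequality gives $\big|\{|u_k-u|\ge\sigma\}\big|\le\rho(u_k-u)/\min(\sigma^{p_-},\sigma^{p_+})\to 0$; hence $u_k\to u$ in measure. The convergence $\norm{u_k}_{\lps}\to\norm{u}_{\lps}$ is then immediate from the reverse triangle inequality.

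\textbf{(b) $\Rightarrow$ (a).} This is the substantive direction. If $\norm{u}_{\lps}=0$ then $u=0$ and (a) reduces to $\norm{u_k}_{\lps}\to 0$, which is the second half of (b); so assume $\lambda:=\norm{u}_{\lps}>0$. It suffices to show that every subsequence of $(u_k)$ has a further subsequence tending to $u$ in $\lps$: fix a subsequence, use that it still converges in measure to extract a sub-subsequence $(v_l)$ with $v_l\to u$ a.e., and note $\lambda_l:=\norm{v_l}_{\lps}\to\lambda$, so $\lambda_l>0$ eventually. \emph{Normalise}: set $w_l:=v_l/\lambda_l$ and $w:=u/\lambda$, so that $w_l\to w$ a.e.\ and, crucially, $\rho(w_l)=\rho(w)=1$ for all $l$. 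Now carry out a Brezis--Lieb-type cancellation for the modular: from the elementary fact that for every $\epsilon>0$ there is $C_\epsilon$ with $\big||a+b|^q-|a|^q\big|\le\epsilon|a|^q+C_\epsilon|b|^q$ for all $a,b\in\mathbb{R}$ and all $q\in[1,p_+]$, applied with $a=w_l-w$, $b=w$, $q=p(x)$, the functions $G_l(x):=\big(\big||w_l|^{p(x)}-|w_l-w|^{p(x)}-|w|^{p(x)}\big|-\epsilon|w_l-w|^{p(x)}\big)_+$ are dominated by $(C_\epsilon+1)|w|^{p(x)}\in L^1(\Omega)$ and tend to $0$ a.e., so $\int_\Omega G_l\dx\to 0$ by dominated convergence; since $\norm{w_l-w}_{\lps}\le 2$ forces $\sup_l\rho(w_l-w)\le 2^{p_+}<\infty$, letting $\epsilon\to 0$ gives $\rho(w_l)-\rho(w_l-w)-\rho(w)\to 0$, i.e.\ $\rho(w_l-w)\to 0$, hence $\norm{w_l-w}_{\lps}\to 0$. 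Finally $v_l-u=\lambda_l(w_l-w)+(\lambda_l-\lambda)w$ yields $\norm{v_l-u}_{\lps}\le\lambda_l\norm{w_l-w}_{\lps}+|\lambda_l-\lambda|\,\norm{w}_{\lps}\to 0$, as needed.

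\textbf{Main obstacle.} The delicate step is (b) $\Rightarrow$ (a), and within it the observation that the Brezis--Lieb cancellation must be applied \emph{after} normalising: for a genuinely variable exponent $\norm{\cdot}_{\lps}$ and $\rho$ are only loosely comparable, so $\norm{v_l}_{\lps}\to\lambda$ does not by itself transfer to $\rho(v_l)\to\rho(u)$, and the identity $\rho(w_l)\equiv 1$ is exactly what repairs this. The remaining ingredients — the modular--norm estimates, Chebyshev's inequality, the uniform pointwise inequality for $|a+b|^q-|a|^q$, and dominated convergence on the finite-measure domain $\Omega$ — are routine.
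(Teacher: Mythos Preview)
The paper does not supply its own proof of this proposition: it is quoted as a known auxiliary fact with a reference to \cite[Thm.~1.4]{FAN2001424}. So there is no argument in the paper to compare yours against.

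That said, your proof is correct. The implication (a)~$\Rightarrow$~(b) is indeed routine via the modular--norm comparison and a Chebyshev bound. For (b)~$\Rightarrow$~(a), your key manoeuvre --- normalising so that $\rho(w_l)=\rho(w)=1$ before running the Brezis--Lieb cancellation --- is exactly the right repair for the variable-exponent setting, where $\norm{v_l}_{\lps}\to\norm{u}_{\lps}$ does not directly yield $\rho(v_l)\to\rho(u)$. The uniformity in $q\in[1,p_+]$ of the constant $C_\epsilon$ in the inequality $\big||a+b|^q-|a|^q\big|\le\epsilon|a|^q+C_\epsilon|b|^q$ follows from a compactness argument on the exponent range, so that step is justified. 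The remaining dominated-convergence and subsequence bookkeeping is sound.
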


%

Finally, we also have a Sobolev embedding for variable exponent spaces, see, e.g.,~\cite[Cor.~8.3.2]{DieningEtAl:2011}.

\begin{proposition} \label{prop:embedding}
Let $p \in C_{+}(\overline{\Omega})$ and $q:\Omega \to [1,\infty]$ measurable such that $q \leq p^\star$ a.e., where $p^\star$ is defined as in~\eqref{eq:pstar}. Then, the embedding $W^{1,p(\cdot)}(\Omega) \hookrightarrow L^{q(\cdot)}(\Omega)$ is continuous, with the embedding constant only depending on $p,q$ and $\Omega$. 
\end{proposition}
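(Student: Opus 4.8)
The plan is to reduce the claim to the critical case $q=p^\star$ and then to establish that critical embedding by a localisation argument built on the classical, constant-exponent Sobolev inequality. For the reduction, since $\Omega$ is bounded one has the elementary inclusion $L^{p^\star(\cdot)}(\Omega)\hookrightarrow L^{q(\cdot)}(\Omega)$ for any measurable $q$ with $q\le p^\star$ almost everywhere --- with the convention $L^\infty(\Omega)\hookrightarrow L^{q(\cdot)}(\Omega)$ accounting for the set $\{p\ge d\}$ on which $p^\star\equiv\infty$ --- cf.\ Proposition~\ref{eq:embedding} and the H\"older inequality, Proposition~\ref{eq:holder}. Hence it suffices to prove that $W^{1,p(\cdot)}(\Omega)\hookrightarrow L^{p^\star(\cdot)}(\Omega)$ continuously, with embedding constant depending only on $d$, $\Omega$ and $p$.

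For the critical embedding I would exploit that $p$ is uniformly continuous on the compact set $\overline{\Omega}$. Fix $\eta>0$; by uniform continuity one may cover $\overline{\Omega}$ by finitely many balls $B_1,\dots,B_N$ so small that $p$ oscillates by less than $\eta$ on each doubled ball $B_j':=2B_j$, and fix a partition of unity $\{\psi_j\}$ subordinate to $\{B_j\}$. After a bounded extension of $u$ across $\partial\Omega$ (possible since $\partial\Omega$ is Lipschitz), each localised piece $u\psi_j$ may be regarded as an element of $W_0^{1,p(\cdot)}(B_j')$ on which $p$ lies between its essential infimum $p_j^-$ and supremum $p_j^+$ over $B_j'$, with $1<p_j^-\le p_j^+<p_j^-+\eta$; the product rule together with the monotone inclusion $L^{p(\cdot)}(B_j')\hookrightarrow L^{p_j^-}(B_j')$ then gives $\|\nabla(u\psi_j)\|_{L^{p_j^-}(B_j')}\lesssim\|u\|_{W^{1,p(\cdot)}(\Omega)}$. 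The constant-exponent Sobolev inequality on $B_j'$ with exponent $p_j^-$ controls $u\psi_j$ in $L^{r}(B_j')$ for every constant $r\le(p_j^-)^\star$, and summing these local estimates via the triangle inequality --- passing freely between the modular $\int_\Omega|\cdot|^{p^\star(x)}\dx$ and the Luxemburg norm $\|\cdot\|_{L^{p^\star(\cdot)}(\Omega)}$ --- yields the global bound. An equivalent route, the one pursued in~\cite{DieningEtAl:2011}, dispenses with localisation and instead uses the pointwise estimate $|u(x)|\le c_d\int_{\mathbb{R}^d}|x-y|^{1-d}\,|\nabla u(y)|\,\dd y$ (valid after extension, by density of smooth functions), reducing everything to the boundedness of this Riesz potential from $L^{p(\cdot)}$ to $L^{p^\star(\cdot)}$.

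The main obstacle I anticipate is the matching of exponents inside each patch: one needs the target $p^\star$ to stay below the constant Sobolev exponent $(p_j^-)^\star$ available on $B_j'$, whereas a priori $p^\star$ can be as large as $(p_j^+)^\star$ there, and --- crucially for the present paper, where $p$ may cross the threshold $2$ and, in principle, $d$ --- this gap does \emph{not} close merely by shrinking $\eta$ once $p$ attains the value $d$. Overcoming it requires a finer splitting of each patch: on $\{|u|<1\}$ the integrand $|u|^{p^\star(x)}$ is harmless; on sub-balls where $p\ge d+\epsilon$ one upgrades $\nabla u$ to $L^{d+\epsilon}$ and invokes $W^{1,d+\epsilon}\hookrightarrow L^\infty$; and the remaining near-critical contribution is absorbed once $\eta$ is small enough. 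In the Riesz-potential presentation the same difficulty resurfaces as --- and is disposed of by --- Hedberg's inequality together with the boundedness of the Hardy--Littlewood maximal operator on $L^{p(\cdot)}(\mathbb{R}^d)$, which is the genuine analytic heart of the statement. Tracking the constants through the finite cover shows the embedding constant depends only on $d$, $\Omega$ and $p$ (through $p_-$, $p_+$ and its modulus of continuity), as asserted; for the stated generality we refer to~\cite[Cor.~8.3.2]{DieningEtAl:2011}.
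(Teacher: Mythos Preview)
The paper does not prove this proposition at all: it is one of the preliminary ``auxiliary results'' quoted from the literature, and the paper simply attributes it to~\cite[Cor.~8.3.2]{DieningEtAl:2011} without any argument. There is thus no ``paper's own proof'' to compare your attempt against; your proposal is an independent sketch of a proof for a statement the paper takes as given, and indeed you close by deferring to the very same reference.

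As a brief comment on the sketch itself: the Riesz-potential route you outline relies on the boundedness of the Hardy--Littlewood maximal operator on $L^{p(\cdot)}(\mathbb{R}^d)$, which in general needs more than mere continuity of $p$ --- log-H\"older continuity is the standard hypothesis --- whereas the proposition only assumes $p\in C_+(\overline\Omega)$. Your localisation argument is less sensitive to this, but the exponent-matching obstacle you yourself flag near $\{p=d\}$ is real and is exactly why the full critical embedding is delicate. In the context of the present paper none of this matters: the result is used only as a black box (in the proof of Theorem~\ref{thm:convergenceinN}), and a detailed proof lies outside the paper's scope.
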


\subsection{Relaxed $p(x)$-Poisson problem}
From now on, we will always assume that $p \in C_{+}(\overline{\Omega})$. As mentioned in the introduction, we shall consider a relaxed version of the problem~\eqref{eq:weakpx} as introduced in~\cite{Diening:2020, BalciDieningStorn:2022}; in those references, however, the exponent was constant. The relaxation is based on two cut-off parameters $0 < \epsm < 1 <\epsp < \infty$; for simplicity, we signify the pair of cut-off parameters $\epsm,\epsp$ by $\epsilon$. In the following, let $X \subseteq \hs$ be a closed subspace; we are especially interested in the cases $X=\hs$ or $X$ being finite dimensional. Then, the relaxed problem is stated as follows:  Find $u \in X$ such that 
\begin{align} \label{eq:weakrelaxed}
\int_\Omega (\epsilon_{-} \lor |\nabla u| \land \epsp)^{p(x)-2} \nabla u \cdot \nabla v \dx = \int_\Omega fv \dx \qquad \text{for all} \ v \in X.
\end{align}
Upon defining the function $\mu_\epsilon:\Omega \times \mathbb{R}_{\geq 0} \to \mathbb{R}_{\geq 0}$ by 
\begin{align} \label{eq:relaxedmu}
\mu_\epsilon(x,t):=\begin{cases}
\epsm^{p(x)-2} & t<\epsm^2,\\
t^{\frac{p(x)-2}{2}} & \epsm^2\leq t \leq \epsp^2, \\
\epsp^{p(x)-2} & t > \epsp^2,
\end{cases}
\end{align}
the relaxed problem~\eqref{eq:weakrelaxed} can be stated equivalently as
\begin{align} \label{eq:rwmuproblem}
\text{Find} \ u \in X: \qquad \int_\Omega \mu_\epsilon(x,|\nabla u|^2) \nabla u \cdot \nabla v \dx=\int_\Omega fv \dx \qquad \text{for all} \ v \in X.
\end{align}
Here, to guarantee the well-posedness of the right-hand side, we need to impose some (possibly) stronger assumptions on the source function $f$. Indeed, thanks to the Sobolev embedding theorem and H\"{o}lders inequality, the integral on the right-hand side of~\eqref{eq:rwmuproblem} is well-defined if 
\begin{align} \label{eq:alpahmin}
\begin{cases}
\alpha_{-}:=\min_{x \in \overline{\Omega}}\alpha(x)>1 & \text{for} \ d=2, \\
\alpha_{-} \geq \frac{6}{5} &\text{for} \ d=3,
\end{cases}
\end{align}
where, as before, $f \in L^{\alpha(\cdot)}(\Omega) \subseteq L^{\alpha_{-}}(\Omega)$, cf.~Proposition~\ref{eq:embedding}, with $\alpha \in C_{+}(\overline{\Omega})$.

As in the unrelaxed case, the relaxed problem~\eqref{eq:rwmuproblem} arises as the Euler--Lagrange equation of an (energy) minimisation problem
\begin{align*} 
\argmin_{u \in X} \E_\epsilon(u),
\end{align*}
where 
\begin{align} \label{eq:relaxedE}
\E_\epsilon(u):=\int_\Omega \varphi_\epsilon(x,|\nabla u|^2) \dx - \int_\Omega f u \dx;
\end{align}
here, for all $x \in \Omega$,
\begin{align*} 
\varphi_\epsilon(x,t):=\begin{cases}
\frac{1}{2}\epsm^{p(x)-2}t+\left(\frac{1}{p(x)}-\frac{1}{2}\right)\epsm^{p(x)} & 0 \leq t< \epsm^2,\\
\frac{1}{p(x)} t^{\nicefrac{p(x)}{2}} & \epsm^2 \leq t \leq \epsp^2, \\
\frac{1}{2}\epsp^{p(x)-2}t+\left(\frac{1}{p(x)}-\frac{1}{2}\right)\epsp^{p(x)} & \epsp^2<t.\\
\end{cases}
\end{align*}

Next, we want to show that~\eqref{eq:rwmuproblem} has a unique solution, which is equivalently the unique minimiser of~\eqref{eq:relaxedE}. For that purpose, and to formulate the damped Ka\v{c}anov scheme in Section~\ref{sec:kacanov}, we shall introduce the operators $\A_\epsilon[\cdot](\cdot):X \times X \to X'$ and $\ell_f \in X'$, where $X'$ signifies the dual space of $X$, which are defined by
\begin{align}\label{eq:Aoperator}
\A_\epsilon[u](v)(w):=\int_\Omega \mu_\epsilon(x,|\nabla u|^2) \nabla v \cdot \nabla w \dx, \qquad u,v,w \in X,
\end{align}
and
\begin{align*}
\ell_{f}(v):=\int_\Omega fv \dx, \qquad v \in X,
\end{align*} 
respectively. Then, upon defining the (residual) operator $\F_\epsilon:X \to X'$ by
\begin{align}\label{eq:Fop}
\F_\epsilon(u):=\A_\epsilon[u](u)-\ell_{f}, \qquad u \in X,
\end{align}
the relaxed problem~\eqref{eq:rwmuproblem} is equivalent to determining an element $u \in X$ such that
\begin{align} \label{eq:Fweakproblem}
\dprod{\F_\epsilon(u),v}=0 \qquad \text{for all} \ v \in X,
\end{align}
where $\dprod{\cdot,\cdot}$ denotes the duality product of $X$ and its dual space $X'$. We further note that~\eqref{eq:Fweakproblem} is equivalent to $\F_\epsilon(u)=0$ in $X'$. Moreover, we have that $\E_\epsilon'=\F_\epsilon$; i.e., $\E_\epsilon$ is the potential of $\F_\epsilon$. 

In order to show the existence and uniqueness of a solution of~\eqref{eq:rwmuproblem}, we will verify that the operator $\F_\epsilon$ is Lipschitz continuous and strongly monotone. To that end, we will first examine the coefficient $\mu(\cdot,\cdot)$. By definition, cf.~\eqref{eq:relaxedmu}, we have that
\begin{align} \label{eq:mubound}
\mu_{-} \leq \mu(x,t) \leq \mu_{+} \qquad \text{for all} \ (x,t) \in \Omega^\star,
\end{align}
where $\Omega^\star=\Omega \times \mathbb{R}_{\geq 0}$ and
\begin{align*} 
\mu_{\epsilon,-}:=\min\left\{\epsm^{p_{+}-2},\epsp^{p_{-}-2}\right\} \quad \text{and} \quad \mu_{\epsilon,+}:=\max \left\{\epsm^{p_{-}-2},\epsp^{p_{+}-2}\right\}.
\end{align*}
Morever, let $\xi_\epsilon:\Omega \times \mathbb{R}_{\geq 0} \to \mathbb{R}_{\geq 0}$ be defined by
\begin{align} \label{eq:xiepsded}
\xi_\epsilon(x,t):=\mu_\epsilon(x,t^2)t \qquad \text{for all} \ x \in \Omega, \ t \geq 0;
\end{align}
this function will be decisive for our analysis below. We note that, for any given $x \in \Omega$, the function $t \mapsto \xi_\epsilon(x,t)$ is continuous and differentiable for $t \not \in \{\epsm,\epsp\}$. Specifically, we have that
\begin{align*} 
\xi_\epsilon'(x,t)=\begin{cases}
\epsm^{p(x)-2} &  t < \epsm, \\
(p(x)-1)t^{p(x)-2} &  \epsm<t<\epsp, \\
\epsp^{p(x)-2} &  t > \epsp,
\end{cases}
\end{align*}
where $\xi_\epsilon'$ denotes the derivative with respect to the second variable. In particular, for given $x \in \Omega$, the mapping $t \mapsto \xi_\epsilon'(x,t)$ is even continuous for $t \not \in \{\epsm,\epsp\}$. In addition we have that
\begin{align} \label{eq:xipmin}
\xi'_{\epsilon,-}:=\inf_{(x,t) \in \Omega^\star} \xi_\epsilon'(x,t)>0 \quad \text{and} \quad \xi'_{\epsilon,+}:=\sup_{(x,t) \in \Omega^\star} \xi_\epsilon'(x,t)<\infty;
\end{align}
here and in the following, for any $x \in \Omega$, we extend the function $\xi'_{\epsilon}(x,t)$ to $t \in \{\epsm,\epsp\}$ by setting $\xi'_{\epsilon}(x,\epsilon_{\pm}):=(p(x)-1)\epsilon_{\pm}^{p(x)-2}$. Hence, by applying the mean value theorem (in a piecewise manner), we find that
\begin{align} \label{eq:muprop}
\xiem(t-s) \leq \mu(x,t^2)t-\mu(x,s^2)s \leq \xiep (t-s), \qquad t \geq s \geq 0,
\end{align}
which, in turn, implies $\xiem \leq \muem$ and $\xiep \geq \muep$. 

\begin{remark} 
For the sake of completeness, we shall make the bounds in \eqref{eq:xipmin} explicit, which requires to distinguish three cases:\\
If $1 < p_{-} \leq p_{+} < 2$, then: 
\begin{align*}
\inf_{(x,t) \in \Omega^\star} \xi_\epsilon'(x,t) \geq (p_{-}-1) \epsp^{p_{-}-2} \qquad \text{and} \qquad \sup_{(x,t) \in \Omega^\star} \xi_\epsilon'(x,t) \leq \epsm^{p_{-}-2}.
\end{align*}
If $2 < p_{-} \leq p_{+} <\infty$, then: 
\begin{align*}
\inf_{(x,t) \in \Omega^\star} \xi_\epsilon'(x,t) \geq \epsm^{p_{+}-2} \qquad \text{and} \qquad 
\sup_{(x,t) \in \Omega^\star} \xi_\epsilon'(x,t) \leq (p_{+}-1) \epsp^{p_{+}-2}.
\end{align*}
If $1 < p_{-}  \leq 2 \leq p_{+} < \infty$, then: 
{\small \begin{align*}
\inf_{(x,t) \in \Omega^\star} \xi_\epsilon'(x,t) \geq (p_{-}-1) \min \left\{\epsm^{p_{+}-2},\epsp^{p_{-}-2}\right\} \quad \text{and} \quad
\sup_{(x,t) \in \Omega^\star} \xi_\epsilon'(x,t) \leq (p_{+}-1) \max \left\{\epsm^{p_{-}-2},\epsp^{p_{+}-2}\right\}.
\end{align*}}
\end{remark}

Our next result, which is the main step towards to strong monotonicity and the Lipschitz continuity of $\F_\epsilon$, is, together with its proof, largely borrowed from~\cite[Lem.~2.1]{HeidSuli:2022}, which in turn is based on~\cite[Lem.~3.1]{BarrettLiu:1993}.

\begin{lemma} \label{lem:help}
Let $\mu_\epsilon$ and $\xi_\epsilon$ be defined as in~\eqref{eq:relaxedmu} and~\eqref{eq:xiepsded}, respectively. Then, for $\bkappa,\btau \in \mathbb{R}^{d}$, we have for any given $x \in \Omega$ that 
\begin{align} \label{eq:mukappaU}
\left|\mu_\epsilon(x,|\bkappa|^2)\bkappa-\mu_\epsilon(x,|\btau|^2)\btau\right|^2 \leq 3 (\xiep)^2 |\bkappa-\btau|^2
\end{align}
and 
\begin{align} \label{eq:mukappal}
(\mu_\epsilon(x,|\bkappa|^2)\bkappa-\mu_\epsilon(x,|\btau|^2)\btau):(\bkappa-\btau) \geq \xiem  |\bkappa-\btau|^2.
\end{align}
\end{lemma}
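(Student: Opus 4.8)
The plan is to reduce both inequalities to a one-dimensional statement about the scalar function $\xi_\epsilon$. The key observation is that the vector field $\bkappa \mapsto \mu_\epsilon(x,|\bkappa|^2)\bkappa$ is of the form $\bkappa \mapsto g(|\bkappa|)\bkappa/|\bkappa|$ with $g(t)=\xi_\epsilon(x,t)$, so it is a radial map and its behaviour is governed entirely by the profile $t \mapsto \xi_\epsilon(x,t)$. The two estimates~\eqref{eq:muprop}, namely $\xiem(t-s) \leq \xi_\epsilon(x,t)-\xi_\epsilon(x,s) \leq \xiep(t-s)$ for $t\geq s\geq 0$, are precisely the Lipschitz and (one-sided) monotonicity bounds for this scalar profile, and everything should follow by a purely algebraic manipulation combining these with Cauchy--Schwarz in $\mathbb{R}^d$.

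For the lower bound~\eqref{eq:mukappal}, I would expand the inner product. Writing $\bkappa = r\mathbf{e}$, $\btau = s\mathbf{f}$ with unit vectors $\mathbf{e},\mathbf{f}$ and $r=|\bkappa|$, $s=|\btau|$, one computes
\begin{align*}
(\mu_\epsilon(x,|\bkappa|^2)\bkappa-\mu_\epsilon(x,|\btau|^2)\btau):(\bkappa-\btau)
= \big(\xi_\epsilon(x,r)\mathbf{e}-\xi_\epsilon(x,s)\mathbf{f}\big):(r\mathbf{e}-s\mathbf{f}).
\end{align*}
Expanding and using $\mathbf{e}\cdot\mathbf{f}\leq 1$ together with $\xi_\epsilon \geq 0$, one bounds this below by $\big(\xi_\epsilon(x,r)-\xi_\epsilon(x,s)\big)(r-s)$ (one has to be a little careful with the sign of $r-s$, but the symmetric roles of $\bkappa,\btau$ mean we may assume $r\geq s$). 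By~\eqref{eq:muprop} this is at least $\xiem(r-s)^2$, and a further use of $\mathbf{e}\cdot\mathbf{f}\leq 1$ gives $(r-s)^2 \leq |\bkappa-\btau|^2$, which closes the estimate. This is standard and I expect no real difficulty here.

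For the upper bound~\eqref{eq:mukappaU}, the factor $3$ and the appearance of $(\xiep)^2$ suggest splitting the difference into two pieces. Write
\begin{align*}
\mu_\epsilon(x,|\bkappa|^2)\bkappa-\mu_\epsilon(x,|\btau|^2)\btau
= \mu_\epsilon(x,|\bkappa|^2)(\bkappa-\btau) + \big(\mu_\epsilon(x,|\bkappa|^2)-\mu_\epsilon(x,|\btau|^2)\big)\btau.
\end{align*}
The first term has norm $\mu_\epsilon(x,|\bkappa|^2)|\bkappa-\btau| \leq \muep |\bkappa-\btau| \leq \xiep|\bkappa-\btau|$ by~\eqref{eq:muprop}. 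The second term requires controlling $\big(\mu_\epsilon(x,|\bkappa|^2)-\mu_\epsilon(x,|\btau|^2)\big)|\btau|$; here one uses that $\big|\mu_\epsilon(x,r^2)r - \mu_\epsilon(x,s^2)s\big| \leq \xiep|r-s| \leq \xiep|\bkappa-\btau|$ from~\eqref{eq:muprop}, together with a monotonicity/triangle argument to pass from the difference of $\xi_\epsilon$ values to the difference of $\mu_\epsilon$ values times $|\btau|$. Squaring the sum of two terms and using $(a+b)^2 \leq 2a^2+2b^2$ would give a constant $4$ rather than $3$, so the sharper constant likely comes from the more careful three-way decomposition used in~\cite[Lem.~3.1]{BarrettLiu:1993}, splitting according to whether $|\bkappa|,|\btau|$ lie in the linear regime or the power-law regime. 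The main obstacle will be reproducing this sharp splitting argument faithfully and handling the several cases ($r,s$ both below $\epsm$, both above $\epsp$, both in between, or straddling a threshold) so that the constant $3$ genuinely emerges; since the statement explicitly says the proof is borrowed from~\cite[Lem.~2.1]{HeidSuli:2022}, I would follow that case analysis closely rather than trying to reinvent it.
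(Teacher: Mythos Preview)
Your lower bound argument has a genuine gap. After reducing to
\[
(\xi_\epsilon(x,r)\mathbf{e}-\xi_\epsilon(x,s)\mathbf{f})\cdot(r\mathbf{e}-s\mathbf{f})
\geq (\xi_\epsilon(x,r)-\xi_\epsilon(x,s))(r-s) \geq \xiem (r-s)^2,
\]
you then invoke $(r-s)^2 \leq |\bkappa-\btau|^2$ to ``close the estimate''. But this inequality points the wrong way: you need the left-hand side to dominate $\xiem|\bkappa-\btau|^2$, and $\xiem(r-s)^2$ is in general \emph{smaller} than that. Concretely, take $|\bkappa|=|\btau|$ with $\bkappa\neq\btau$; then $(r-s)^2=0$ while $|\bkappa-\btau|^2>0$, and your chain gives only the trivial bound $\geq 0$. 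The step where you replaced $\mathbf{e}\cdot\mathbf{f}$ by $1$ discarded precisely the angular contribution that carries the estimate in this case. The paper keeps that term: it writes
\[
(\mu_\epsilon(|\bkappa|^2)\bkappa-\mu_\epsilon(|\btau|^2)\btau)\cdot(\bkappa-\btau)
=(\xi_\epsilon(r)-\xi_\epsilon(s))(r-s)
+(\mu_\epsilon(|\bkappa|^2)+\mu_\epsilon(|\btau|^2))(|\bkappa||\btau|-\bkappa\cdot\btau),
\]
bounds the second summand below by $2\muem(|\bkappa||\btau|-\bkappa\cdot\btau)\geq 2\xiem(|\bkappa||\btau|-\bkappa\cdot\btau)$, and then the two pieces recombine exactly to $\xiem|\bkappa-\btau|^2$. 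The inequality $\muem\geq\xiem$ is what makes this work.

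For the upper bound, your add-and-subtract decomposition is different from the paper's and, as you note, only yields the constant $4$. Your guess that the sharp constant $3$ requires a case analysis over the regimes $[0,\epsm)$, $[\epsm,\epsp]$, $(\epsp,\infty)$ is mistaken: no case analysis is needed. The paper (following the cited references) uses the exact algebraic identity
\[
|\mu_\epsilon(|\bkappa|^2)\bkappa-\mu_\epsilon(|\btau|^2)\btau|^2
=(\xi_\epsilon(r)-\xi_\epsilon(s))^2+2\mu_\epsilon(|\bkappa|^2)\mu_\epsilon(|\btau|^2)(|\bkappa||\btau|-\bkappa\cdot\btau),
\]
then bounds the first term by $(\xiep)^2(r-s)^2\leq(\xiep)^2|\bkappa-\btau|^2$ and the second by $2\muep^2\cdot\tfrac12|\bkappa-\btau|^2\leq 2(\xiep)^2|\bkappa-\btau|^2$, giving $3(\xiep)^2|\bkappa-\btau|^2$ directly.
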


\begin{proof}
Let us start with the proof of the bound~\eqref{eq:mukappaU}. First, we note that
\begin{multline*}
\left|\mu_\epsilon(x,|\bkappa|^2)\bkappa-\mu_\epsilon(x,|\btau|^2)\btau\right|^2 \\ 
\begin{aligned}
&=\mu_\epsilon(x,|\bkappa|^2)^2|\bkappa|^2+ \mu_\epsilon(x,|\btau|^2)^2|\btau|^2-2\mu_\epsilon(x,|\bkappa|^2)\mu_\epsilon(x,|\btau|^2)\bkappa \cdot \btau\\
&=\left(\mu_\epsilon(x,|\bkappa|^2)|\bkappa|- \mu_\epsilon(x,|\btau|^2)|\btau| \right)^2+ 2\mu_\epsilon(x,|\bkappa|^2)\mu_\epsilon(x,|\btau|^2)(|\bkappa||\btau|-\bkappa \cdot \btau) \\
&\leq \left(\mu_\epsilon(x,|\bkappa|^2)|\bkappa|- \mu_\epsilon(x,|\btau|^2)|\btau| \right)^2+ 2\mu_\epsilon(x,|\bkappa|^2)\mu_\epsilon(x,|\btau|^2)|\bkappa-\btau|^2.
\end{aligned}
\end{multline*}
In turn, the upper bounds in~\eqref{eq:muprop} and~\eqref{eq:mubound} imply that
\begin{align*}
\left|\mu_\epsilon(x,|\bkappa|^2)\bkappa-\mu_\epsilon(x,|\btau|^2)\btau\right|^2  \leq (\xiep)^2 (|\bkappa|-|\btau|)^2+2\muep^2|\bkappa-\btau|^2 \leq 3 (\xiep)^2 |\bkappa-\btau|^2,
\end{align*}
where we used in the second inequality that $\xiep \geq \muep$. 

Now we will take care of the inequality~\eqref{eq:mukappal}. A simple and straightforward calculation reveals that
\begin{align} 
(\mu_\epsilon(|\bkappa|^2)\bkappa-\mu_\epsilon(|\btau|^2)\btau) \cdot (\bkappa-\btau) &= \mu_\epsilon(|\bkappa|^2)|\bkappa|^2+\mu_\epsilon(|\btau|^2)|\btau|^2-\mu_\epsilon(|\bkappa|^2)\bkappa \cdot \btau-\mu_\epsilon(|\btau|^2)\btau \cdot \bkappa \nonumber \\ 
&=\left(\mu_\epsilon(|\bkappa|^2)|\bkappa|-\mu_\epsilon(|\btau|^2)|\btau|\right)(|\bkappa|-|\btau|) \label{eq:firstsummand} \\
& \quad + (\mu_\epsilon(|\bkappa|^2)+\mu_\epsilon(|\btau|^2))(|\bkappa||\btau|-\bkappa\cdot \btau). \label{eq:secondsummand}
\end{align}
The term in~\eqref{eq:firstsummand} can be bounded from below, thanks to~\eqref{eq:muprop}, by
\begin{align} \label{eq:boundfirstsummand}
\left(\mu_\epsilon(|\bkappa|^2)|\bkappa|-\mu_\epsilon(|\btau|^2)|\btau|\right)(|\bkappa|-|\btau|) \geq \xiem (|\bkappa|-|\btau|)^2
\end{align}
Concerning~\eqref{eq:secondsummand}, we find by considering the bound~\eqref{eq:mubound} that
\begin{align} \label{eq:boundsecondsummand}
(\mu_\epsilon(|\bkappa|^2)+\mu_\epsilon(|\btau|^2))(|\bkappa||\btau|-\bkappa \cdot \btau) \geq 2 \muem (|\bkappa||\btau|-\bkappa \cdot \btau) \geq 2 \xiem (|\bkappa||\btau|-\bkappa \cdot \btau). 
\end{align}
Consequently, using the established bounds~\eqref{eq:boundfirstsummand} and~\eqref{eq:boundsecondsummand} for the summands in~\eqref{eq:firstsummand} and~\eqref{eq:secondsummand}, respectively, yields
\begin{align*}
(\mu_\epsilon(|\bkappa|^2)\bkappa-\mu_\epsilon(|\btau|^2)\btau) \cdot (\bkappa-\btau) &\geq \xim \left( (|\bkappa|-|\btau|)^2+2\left(|\bkappa||\btau|-\bkappa \cdot \btau\right) \right) \\
&=\xiem\left(|\bkappa|^2+|\btau|^2-2 \bkappa \cdot \btau\right) \\
&=\xiem|\bkappa-\btau|^2;
\end{align*}
this finishes the proof.
\end{proof}

Based on this lemma, one can derive the strong monotonicity and the Lipschitz continuity of the mapping $u \mapsto \A_\epsilon[u](u)$, and equivalently of $u \to \F_\epsilon(u)$ since $\F_\epsilon(u)-\F_\epsilon(v)=\A_\epsilon[u](u)-\A_\epsilon[v](v)$.

\begin{proposition} [{\hspace{0.1pt} \cite[Prop.~2.2]{HeidSuli:2022}}] \label{lem:properties}
Let $\mu_\epsilon, \xi_\epsilon, \A_\epsilon$, and $\F_\epsilon$ be defined as in~\eqref{eq:relaxedmu},~\eqref{eq:xiepsded},~\eqref{eq:Aoperator}, and~\eqref{eq:Fop}, respectively.
\begin{enumerate}[(a)]
\item For given $u \in X$, $\A_\epsilon[u](\cdot)(\cdot)$ is a uniformly bounded and coercive, symmetric bilinear form on $X \times X$. In particular, the following inequalities hold:
\begin{align} \label{eq:Abd}
\A_\epsilon[u](v)(w) \leq \muep \ltn{\nabla v}\ltn{\nabla w}
\end{align}
and 
\begin{align} \label{eq:coercive}
\A_\epsilon[u](v)(v) \geq \muem \ltn{ \nabla v}^2
\end{align}   
for any $u,v,w \in X$.
\item The mappings $u \mapsto \A_\epsilon[u](u)$ and $u \mapsto \F_\epsilon(u)$ are Lipschitz continuous with
\begin{align} \label{eq:lipschitz}
\dprod{\F_\epsilon(u)-\F_\epsilon(v),w}=\A_\epsilon[u](u)(w)-\A_\epsilon[v](v)(w) \leq \sqrt{3} \xiep \ltn{\nabla(u-v)}\ltn{\nabla w},
\end{align}
$u,v,w \in X$, and strongly monotone with
\begin{align} \label{eq:smonotone}
\dprod{\F_\epsilon(u)-\F_\epsilon(v),u-v}=\A_\epsilon[u](u)(u-v)-\A_\epsilon[v](v)(u-v) \geq \xiem \ltn{\nabla(u-v)}^2, 
\end{align} 
$u,v \in X$.
\end{enumerate} 
\end{proposition}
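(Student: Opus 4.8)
The plan is to deduce both statements directly from Lemma~\ref{lem:help} together with the uniform bounds \eqref{eq:mubound} on $\mu_\epsilon$ and the Cauchy--Schwarz inequality in $L^2(\Omega)$. Since $X\subseteq\hs$, every $v\in X$ has $\nabla v\in L^2(\Omega)$, and since $x\mapsto\mu_\epsilon(x,|\nabla u(x)|^2)$ is measurable and essentially bounded by $\muep$, all integrands occurring below lie in $L^1(\Omega)$; well-definedness of $\ell_f\in X'$, and hence of $\F_\epsilon$, is already guaranteed by \eqref{eq:alpahmin}.

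For part (a), bilinearity and symmetry of $(v,w)\mapsto\A_\epsilon[u](v)(w)=\int_\Omega\mu_\epsilon(x,|\nabla u|^2)\,\nabla v\cdot\nabla w\dx$ follow at once from the linearity of the gradient and the symmetry of the Euclidean inner product. To obtain \eqref{eq:Abd} I would bound the coefficient from above by $\muep$ using \eqref{eq:mubound}, estimate $\nabla v\cdot\nabla w\leq|\nabla v|\,|\nabla w|$ pointwise, and apply Cauchy--Schwarz in $L^2(\Omega)$; in particular this shows $\A_\epsilon[u](u)\in X'$ for every $u\in X$. For \eqref{eq:coercive} I would instead use the lower bound $\mu_\epsilon\geq\muem$ from \eqref{eq:mubound} to get $\A_\epsilon[u](v)(v)=\int_\Omega\mu_\epsilon(x,|\nabla u|^2)|\nabla v|^2\dx\geq\muem\ltn{\nabla v}^2$. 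Since neither $\muep$ nor $\muem$ depends on $u$, the bounds are uniform in $u$.

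For part (b), using \eqref{eq:Fop} and cancelling the $\ell_f$-terms I would first record the identity $\dprod{\F_\epsilon(u)-\F_\epsilon(v),w}=\int_\Omega\big(\mu_\epsilon(x,|\nabla u|^2)\nabla u-\mu_\epsilon(x,|\nabla v|^2)\nabla v\big)\cdot\nabla w\dx$, valid for all $u,v,w\in X$. For the Lipschitz estimate \eqref{eq:lipschitz} I would apply Cauchy--Schwarz pointwise in $\mathbb{R}^d$ and then in $L^2(\Omega)$ to bound the right-hand side by $\ltn{\mu_\epsilon(\cdot,|\nabla u|^2)\nabla u-\mu_\epsilon(\cdot,|\nabla v|^2)\nabla v}\,\ltn{\nabla w}$, and then invoke the pointwise inequality \eqref{eq:mukappaU} of Lemma~\ref{lem:help} with $\bkappa=\nabla u(x)$ and $\btau=\nabla v(x)$, integrating over $\Omega$ to produce the factor $\sqrt{3}\,\xiep\,\ltn{\nabla(u-v)}$. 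For the strong monotonicity \eqref{eq:smonotone} I would take $w=u-v$ in the identity and apply the pointwise bound \eqref{eq:mukappal} of Lemma~\ref{lem:help} directly under the integral sign, which gives $\dprod{\F_\epsilon(u)-\F_\epsilon(v),u-v}\geq\xiem\int_\Omega|\nabla(u-v)|^2\dx=\xiem\ltn{\nabla(u-v)}^2$.

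Once Lemma~\ref{lem:help} is available there is no real obstacle: the argument is a routine pairing of the pointwise algebraic inequalities with the Cauchy--Schwarz inequality. The only point that deserves a little care is the measurability and integrability of the integrands, which is settled as indicated above by the boundedness \eqref{eq:mubound} of $\mu_\epsilon$ and the inclusion $X\subseteq\hs$.
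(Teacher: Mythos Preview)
Your proposal is correct and follows essentially the same approach as the paper's own proof: both derive part (a) from the uniform bounds \eqref{eq:mubound} together with the Cauchy--Schwarz inequality, and part (b) by pairing the pointwise inequalities \eqref{eq:mukappaU} and \eqref{eq:mukappal} of Lemma~\ref{lem:help} with Cauchy--Schwarz in $L^2(\Omega)$. The only cosmetic difference is the order in which you apply Cauchy--Schwarz and \eqref{eq:mukappaU} for the Lipschitz estimate, which is immaterial.
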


\begin{proof}
Our proof is borrowed from~\cite{HeidSuli:2022}. First of all, by definition of $\A_\epsilon$, cf.~\eqref{eq:Aoperator}, the upper bound in~\eqref{eq:mubound}, and the Cauchy--Schwarz inequality we immediately obtain that
\begin{align*}
\A_\epsilon[u](v)(w) = \int_\Omega \mu_\epsilon(x,|\nabla u|^2) \nabla v \cdot \nabla w \dx \leq \muep \ltn{\nabla v} \ltn{\nabla w},
\end{align*}
which is the bound claimed in~\eqref{eq:Abd}. Similarly, by replacing the upper bound with the lower bound in~\eqref{eq:mubound}, we arrive at~\eqref{eq:coercive}.

Next, let $u,v,w \in X$ be arbitrarily. We have by definition of $\A_\epsilon$ that
\begin{align*}
\A_\epsilon[u](u)(w)-\A_\epsilon[v](v)(w)=\int_\Omega \left(\mu_\epsilon(x,|\nabla u|^2)\nabla u-\mu_\epsilon(x,|\nabla v|^2\right)\nabla v) \cdot \nabla w \dx. 
\end{align*}
Invoking~\eqref{eq:mukappaU} and the Cauchy--Schwarz inequality, this further implies
\begin{align*}
\A_\epsilon[u](u)(w)-\A_\epsilon[v](v)(w) &\leq \sqrt{3} \xiep \int_\Omega |\nabla(u-v)| | \nabla w| \dx\\
&= \sqrt{3} \xiep \ltn{\nabla (u-v)}\ltn{\nabla w}.
\end{align*} 
In a similar manner we find that, for any $u,v \in X$,
\begin{align*}
\A_\epsilon[u](u)(u-v)-\A_\epsilon[v](v)(u-v)=\int_\Omega \left(\mu_\epsilon(x,|\nabla u|^2)\nabla u-\mu_\epsilon(x,|\nabla v|^2\right)\nabla v) \cdot \nabla(u-v) \dx.
\end{align*}
As $\nabla(\cdot)$ is a linear operator, the bound~\eqref{eq:mukappal} implies the strong monotonicity~\eqref{eq:smonotone}.
\end{proof}

Since $\F_\epsilon$ is strongly monotone and Lipschitz continuous, we immediately obtain the existence and uniqueness result stated below; we refer to~\cite[\S 25.5]{Zeidler:90} for details.

\begin{theorem}
The energy functional $\E_\epsilon:X \to \mathbb{R}$ from~\eqref{eq:relaxedE} has a unique global energy minimiser $u_{X,\epsilon}^\star \in X$, which, equivalently, is the unique solution of the weak, relaxed problem~\eqref{eq:rwmuproblem}.
\end{theorem}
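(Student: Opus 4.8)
The plan is to reduce the statement to the classical solvability theory for strongly monotone, Lipschitz continuous operators on a Hilbert space, as developed e.g.\ in~\cite[\S~25.5]{Zeidler:90}. Since $X$ is by assumption a closed subspace of $\hs$, it is itself a real, separable Hilbert space, on which $\ltn{\nabla\cdot}$ is a norm. Proposition~\ref{lem:properties}(b) provides exactly the two structural properties needed: the residual operator $\F_\epsilon:X\to X'$ is Lipschitz continuous (hence, in particular, hemicontinuous) by~\eqref{eq:lipschitz}, and strongly monotone by~\eqref{eq:smonotone} with monotonicity constant $\xiem>0$. The Zarantonello/Browder--Minty theorem therefore yields a unique element $\uex\in X$ with $\F_\epsilon(\uex)=0$ in $X'$; by~\eqref{eq:Fweakproblem} this is precisely the assertion that $\uex$ solves the weak, relaxed problem~\eqref{eq:rwmuproblem}. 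Thus existence and uniqueness for~\eqref{eq:rwmuproblem} is immediate.

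It then remains to identify this unique zero of $\F_\epsilon$ with the unique global minimiser of $\E_\epsilon$. Recall that $\E_\epsilon$ is the potential of $\F_\epsilon$, i.e.\ $\E_\epsilon'=\F_\epsilon$. Integrating along the segment joining $u$ and $v$ gives, for all $u,v\in X$,
\begin{align*}
\E_\epsilon(v)-\E_\epsilon(u)-\dprod{\F_\epsilon(u),v-u} = \int_0^1 \dprod{\F_\epsilon(u+t(v-u))-\F_\epsilon(u),\,v-u}\dt ,
\end{align*}
and applying the strong monotonicity~\eqref{eq:smonotone} to the integrand (with the pair $u+t(v-u)$ and $u$) yields
\begin{align*}
\E_\epsilon(v)-\E_\epsilon(u)-\dprod{\F_\epsilon(u),v-u} \;\geq\; \frac{\xiem}{2}\,\ltn{\nabla(v-u)}^2 .
\end{align*}
Hence $\E_\epsilon$ is strictly (indeed uniformly) convex, so it possesses at most one minimiser, and an element $u\in X$ minimises $\E_\epsilon$ if and only if $\F_\epsilon(u)=0$. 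Combining this with the first step shows that $\uex$ is the unique minimiser of $\E_\epsilon$, which is the claim.

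As an alternative, the existence of a minimiser can be argued directly by the calculus of variations: setting $u=0$ in the displayed inequality gives $\E_\epsilon(v)\geq \E_\epsilon(0)-\norm{\F_\epsilon(0)}_{X'}\ltn{\nabla v}+\tfrac{\xiem}{2}\ltn{\nabla v}^2$, so $\E_\epsilon$ is coercive on $X$; together with the weak lower semicontinuity that follows from convexity and (strong) continuity of $\E_\epsilon$, and the reflexivity of $X$, the direct method produces a minimiser, which is unique by strict convexity and solves the Euler--Lagrange equation~\eqref{eq:rwmuproblem}. Either route leads to the same conclusion.

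Since the two substantive ingredients --- Lipschitz continuity and strong monotonicity of $\F_\epsilon$ --- are already in hand from Proposition~\ref{lem:properties}, there is no genuine obstacle; the only points deserving a word of care are the well-definedness and finiteness of the objects involved. One should note that $\ell_f$ is a bounded linear functional on $X$ under the standing assumption~\eqref{eq:alpahmin} (via the Sobolev embedding of Proposition~\ref{prop:embedding} combined with the Hölder inequality of Proposition~\ref{eq:holder}), and that $\E_\epsilon$ from~\eqref{eq:relaxedE} is real-valued on $X$ because $\varphi_\epsilon(x,|\nabla u|^2)$ grows at most quadratically in $|\nabla u|$ and is therefore integrable for every $u\in X\subseteq\hs$; differentiability with $\E_\epsilon'=\F_\epsilon$, which underpins the segment-integration identity above, has already been recorded.
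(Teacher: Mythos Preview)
Your proposal is correct and follows essentially the same approach as the paper: the paper simply states that, since $\F_\epsilon$ is strongly monotone and Lipschitz continuous by Proposition~\ref{lem:properties}, the result follows immediately from~\cite[\S~25.5]{Zeidler:90}. You have additionally spelled out the identification of the unique zero of $\F_\epsilon$ with the unique minimiser of $\E_\epsilon$ via the segment-integration/convexity argument, which the paper leaves implicit in the Zeidler reference; this is a welcome elaboration but not a different route.
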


\section{Damped Ka\v{c}anov iteration scheme for the relaxed $p(x)$-Poisson equation} \label{sec:kacanov}

In this section, we will consider the damped Ka\v{c}anov scheme from~\cite{HeidWihlerm2an:2022}, which is defined as follows: For any given iterate $u^n \in X$, the ensuing element is given by
\begin{align} \label{eq:kacanov}
u^{n+1}=u^n -\delta(u^n)\A_\epsilon[u^n]^{-1}\F_\epsilon(u^n),
\end{align}
where $\delta(u^n)>0$ is a damping factor, and $\A_\epsilon[\cdot](\cdot): X \times X \to X'$ and $\F_\epsilon:X \to X'$ are defined as in~\eqref{eq:Aoperator} and~\eqref{eq:Fop}, respectively. We can equivalently state~\eqref{eq:kacanov} in the form
\begin{align} \label{eq:kacanov2}
\A_\epsilon[u^n](u^{n+1}-u^n)=-\delta(u^n)\F_\epsilon(u^n),
\end{align}
since, for fixed $u \in X$, the operator $\A_\epsilon[u]:X \to X'$ is linear. The latter formulation of the Ka\v{c}anov scheme will come in very handy in our analysis below. The goal of this section is to show that the sequence generated by the Ka\v{c}anov scheme~\eqref{eq:kacanov} converges to the unique solution $u_{X,\epsilon}^\star$ of our relaxed problem~\eqref{eq:weakrelaxed} in $X$; we emphasise once more that $u_{X,\epsilon}^\star$ is equivalently the unique minimiser of $\E_\epsilon$, cf.~\eqref{eq:relaxedE}, in $X$. For that purpose, we will need the following preliminary result, which, in particular, is borrowed from~\cite[Cor.~2.7]{HeidWihlerm2an:2022}. Even though our proof will largely proceed along the lines of the proof of~\cite[Cor.~2.7]{HeidWihlerm2an:2022}, we will include it in the work presented herein for the sake of completeness and since we use a distinct notation of the iteration scheme, which allows for some simplifications of the proof.

\begin{lemma}
Let $\{u^n\}_n \subset X$ denote the sequence generated by the Ka\v{c}anov scheme~\eqref{eq:kacanov}. Then we have that
\begin{align} \label{eq:energydiff}
\E(u^n)-\E(u^{n+1}) \geq \left(\frac{\muem}{\delta(u^n)}-\frac{\sqrt{3}\xiep}{2} \right)\nnnn{\nabla(u^{n+1}-u^n)}^2.
\end{align}
In particular, if $0<\delta_{\min} \leq \delta(u^n)\leq \delta_{\max}<\nicefrac{2 \muem}{\sqrt{3}\xiep}$ for all $n \in \mathbb{N}$, then 
\begin{align*} 
\E(u^n)-\E(u^{n+1}) \geq \gamma \nnnn{\nabla(u^{n+1}-u^n)}^2, \qquad n \in \mathbb{N},
\end{align*}
where 
\begin{align*} 
\gamma:=\left(\frac{\muem}{\delta_{\max}}-\frac{\sqrt{3} \xiep }{2}\right)>0.
\end{align*}
\end{lemma}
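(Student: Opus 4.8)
The plan is to quantify how much the energy $\E_\epsilon$ decreases along one step of the damped Ka\v{c}anov iteration. (I note in passing that the statement is written with $\E$ but the relevant functional is $\E_\epsilon$ from~\eqref{eq:relaxedE}, since its derivative is $\F_\epsilon$; the argument below is for $\E_\epsilon$.) The natural device is a Taylor-type expansion of $\E_\epsilon$ using the fundamental theorem of calculus along the segment $u^n_t := u^n + t(u^{n+1}-u^n)$, $t\in[0,1]$, exploiting that $\E_\epsilon' = \F_\epsilon$. Writing $e := u^{n+1}-u^n$, this gives
\begin{align*}
\E_\epsilon(u^{n+1})-\E_\epsilon(u^n) = \int_0^1 \dprod{\F_\epsilon(u^n_t),e}\dt.
\end{align*}

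Next I would split $\dprod{\F_\epsilon(u^n_t),e} = \dprod{\F_\epsilon(u^n),e} + \dprod{\F_\epsilon(u^n_t)-\F_\epsilon(u^n),e}$. For the first term, the defining relation~\eqref{eq:kacanov2} of the scheme, $\A_\epsilon[u^n](e) = -\delta(u^n)\F_\epsilon(u^n)$, lets me substitute $\dprod{\F_\epsilon(u^n),e} = -\delta(u^n)^{-1}\A_\epsilon[u^n](e)(e)$, and then the coercivity bound~\eqref{eq:coercive} gives $\A_\epsilon[u^n](e)(e)\geq \muem\,\nnnn{\nabla e}^2$, so this term contributes at most $-\delta(u^n)^{-1}\muem\,\nnnn{\nabla e}^2$. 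For the second term, since $u^n_t - u^n = t\,e$, the Lipschitz bound~\eqref{eq:lipschitz} yields $\dprod{\F_\epsilon(u^n_t)-\F_\epsilon(u^n),e}\leq \sqrt{3}\,\xiep\, t\,\nnnn{\nabla e}^2$; integrating the factor $t$ over $[0,1]$ produces the $\tfrac12$. Combining, $\E_\epsilon(u^{n+1})-\E_\epsilon(u^n) \leq \big(-\tfrac{\muem}{\delta(u^n)} + \tfrac{\sqrt3\,\xiep}{2}\big)\nnnn{\nabla e}^2$, which upon rearrangement is exactly~\eqref{eq:energydiff}.

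The second assertion is then immediate: if $\delta(u^n)\leq \delta_{\max}$, then $\tfrac{\muem}{\delta(u^n)}\geq \tfrac{\muem}{\delta_{\max}}$, so the coefficient on the right of~\eqref{eq:energydiff} is bounded below by $\gamma = \tfrac{\muem}{\delta_{\max}} - \tfrac{\sqrt3\,\xiep}{2}$, and the strict inequality $\delta_{\max} < \tfrac{2\muem}{\sqrt3\,\xiep}$ is precisely what guarantees $\gamma>0$ (while $\delta_{\min}>0$ ensures $\A_\epsilon[u^n]^{-1}$ is applied to a well-scaled quantity, though it plays no role in this particular estimate). The main technical point to be careful about — the only place where something could go wrong — is the legitimacy of the integral representation $\E_\epsilon(u^{n+1})-\E_\epsilon(u^n)=\int_0^1\dprod{\F_\epsilon(u^n_t),e}\dt$: this requires knowing that $t\mapsto \E_\epsilon(u^n_t)$ is $C^1$ with derivative $\dprod{\E_\epsilon'(u^n_t),e}=\dprod{\F_\epsilon(u^n_t),e}$, i.e.\ that $\E_\epsilon$ is Gâteaux differentiable with derivative $\F_\epsilon$ and that the chain rule applies. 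This follows from the remark made earlier that $\E_\epsilon' = \F_\epsilon$ together with the continuity (indeed Lipschitz continuity) of $\F_\epsilon$ established in Proposition~\ref{lem:properties}, so it is a routine verification rather than a genuine obstacle; everything else is a direct chain of the already-proven estimates~\eqref{eq:coercive}, \eqref{eq:lipschitz}, and the scheme's defining identity~\eqref{eq:kacanov2}.
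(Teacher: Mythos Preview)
Your proof is correct and follows essentially the same route as the paper: fundamental theorem of calculus along the segment, split off $\dprod{\F_\epsilon(u^n),e}$, rewrite it via~\eqref{eq:kacanov2} and bound with coercivity~\eqref{eq:coercive}, and bound the remaining integrand with the Lipschitz estimate~\eqref{eq:lipschitz} to pick up the factor $\tfrac12$. Your observation that the statement should read $\E_\epsilon$ rather than $\E$ is also correct and matches what the paper's proof actually establishes.
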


\begin{proof}
As mentioned before, we will proceed along the lines of the proof of~\cite[Cor.~2.7]{HeidWihlerm2an:2022}. For given $n \in \mathbb{N}$, let us define the real-valued function $\varphi(t):=\E_\epsilon(u^n+t(u^{n+1}-u^n))$. Then, the fundamental theorem of calculus implies that
\begin{align*}
\E_\epsilon(u^{n+1})-\E_\epsilon(u^n) &=\int_0^1 \dprod{\E_\epsilon'(u^n+t(u^{n+1}-u^n)),u^{n+1}-u^n} \dt\\
&= \int_0^1\dprod{\F_\epsilon(u^n+t(u^{n+1}-u^n))-\F_\epsilon(u^n),u^{n+1}-u^n} \dt+\dprod{\F_\epsilon(u^n),u^{n+1}-u^n},
\end{align*} 
where we used in the second step that $\E_\epsilon'=\F_\epsilon$. Next we may apply the Lipschitz continuity~\eqref{eq:lipschitz} and the definition of the Ka\v{c}anov iteration~\eqref{eq:kacanov2}, which yields
\begin{align*}
\E_\epsilon(u^{n+1})-\E_\epsilon(u^n) &\leq \frac{\sqrt{3} \xiep}{2} \nnnn{\nabla(u^{n+1}-u^n)}^2-\frac{1}{\delta(u^n)}\A_\epsilon[u^n](u^{n+1}-u^n)(u^{n+1}-u^n)\\
& \leq \frac{\sqrt{3} \xiep}{2} \nnnn{\nabla(u^{n+1}-u^n)}^2-\frac{\muem}{\delta(u^n)}\nnnn{\nabla(u^{n+1}-u^n)}^2
\end{align*}
thanks to~\eqref{eq:coercive}. Finally, a simple multiplication by $-1$ yields~\eqref{eq:energydiff}.
\end{proof}

Now we have all the ingredients to show that the damped Ka\v{c}anov iteration converges to the unique minimiser $\uex$ of $\E_\epsilon$ in $X$.

\begin{theorem}
If $\delta(u^n) \geq \delta_{\min}>0$ for all $n \in \mathbb{N}$ and
\begin{align} \label{eq:keypr}
\E_\epsilon(u^n)-\E_\epsilon(u^{n+1}) \geq \gamma \nnnn{\nabla(u^{n+1}-u^n)}^2,
\end{align}
for some $\gamma>0$ independent of $n \in \mathbb{N}$, then the sequence $\{u^n\}_n \subset X$ generated by the damped Ka\v{c}anov scheme~\eqref{eq:kacanov} converges the unique energy minimiser $\uex$ of $\E_\epsilon$ in $X$. Especially, this holds true if $0<\delta_{\min} \leq \delta(u^n)\leq \delta_{\max}<\nicefrac{2 \muem}{\sqrt{3}\xiep}$ for all $n \in \mathbb{N}$.
\end{theorem}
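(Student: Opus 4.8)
The plan is to couple the monotone energy decrease furnished by~\eqref{eq:keypr} with the strong monotonicity of $\F_\epsilon$. First I would verify that $\E_\epsilon$ is bounded below on $X$; since $\{\E_\epsilon(u^n)\}_n$ is non-increasing by~\eqref{eq:keypr}, it would then be convergent, and, invoking~\eqref{eq:keypr} once more, the increments $\nnnn{\nabla(u^{n+1}-u^n)}$ would tend to zero. Next I would control the residual $\F_\epsilon(u^n)$ in $X'$ by this increment, using the Ka\v{c}anov identity~\eqref{eq:kacanov2}, the uniform boundedness~\eqref{eq:Abd} of $\A_\epsilon$, and the lower bound $\delta(u^n) \geq \delta_{\min}$. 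Finally I would turn the vanishing residual into the convergence $u^n \to \uex$ via the strong monotonicity~\eqref{eq:smonotone} and the identity $\F_\epsilon(\uex)=0$.

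For the lower bound on $\E_\epsilon$, the cleanest route is to integrate~\eqref{eq:smonotone}: for $v,w \in X$, the fundamental theorem of calculus together with $\E_\epsilon'=\F_\epsilon$ gives
\begin{align*}
\E_\epsilon(v) - \E_\epsilon(w) &= \int_0^1 \dprod{\F_\epsilon(w+t(v-w)),\, v-w} \dt \\
&\geq \dprod{\F_\epsilon(w),\, v-w} + \frac{\xiem}{2}\nnnn{\nabla(v-w)}^2,
\end{align*}
where the inequality comes from adding and subtracting $\dprod{\F_\epsilon(w),\, v-w}$ inside the integral and applying~\eqref{eq:smonotone} to the pair $w+t(v-w)$ and $w$; this is just strong convexity of $\E_\epsilon$. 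Taking $w=\uex$ and recalling $\F_\epsilon(\uex)=0$ gives $\E_\epsilon(v) \geq \E_\epsilon(\uex)$ for every $v \in X$, so $\{\E_\epsilon(u^n)\}_n$ converges, $\E_\epsilon(u^n)-\E_\epsilon(u^{n+1}) \to 0$, and~\eqref{eq:keypr} forces $\nnnn{\nabla(u^{n+1}-u^n)} \to 0$.

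For the remaining two steps, the identity~\eqref{eq:kacanov2} yields $\dprod{\F_\epsilon(u^n),v} = -\delta(u^n)^{-1}\A_\epsilon[u^n](u^{n+1}-u^n)(v)$ for every $v \in X$, so~\eqref{eq:Abd} (also applied to $-v$) and $\delta(u^n) \geq \delta_{\min}$ give $\norm{\F_\epsilon(u^n)}_{X'} \leq \frac{\muep}{\delta_{\min}}\nnnn{\nabla(u^{n+1}-u^n)} \to 0$. Then~\eqref{eq:smonotone}, applied to the pair $u^n$ and $\uex$ together with $\F_\epsilon(\uex)=0$, gives
\begin{align*}
\xiem \nnnn{\nabla(u^n-\uex)}^2 \leq \dprod{\F_\epsilon(u^n),\, u^n-\uex} \leq \norm{\F_\epsilon(u^n)}_{X'}\,\nnnn{\nabla(u^n-\uex)},
\end{align*}
whence $\nnnn{\nabla(u^n-\uex)} \leq \xiem^{-1}\norm{\F_\epsilon(u^n)}_{X'} \to 0$, that is, $u^n \to \uex$ in $X$. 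The final ``especially'' clause is then immediate, as the preceding lemma delivers~\eqref{eq:keypr} with $\gamma=\frac{\muem}{\delta_{\max}}-\frac{\sqrt{3}\xiep}{2}>0$ under the two-sided bounds on $\delta(u^n)$.

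I expect the only non-mechanical step to be the boundedness of $\E_\epsilon$ from below; everything else is a routine assembly of the a priori bounds~\eqref{eq:Abd} and~\eqref{eq:smonotone} already established. That lower bound is nothing but the integrated (strong convexity) form of~\eqref{eq:smonotone}, hence essentially free; alternatively, one could argue directly that $\E_\epsilon$ is coercive, bounding $\int_\Omega \varphi_\epsilon(x,|\nabla u|^2)\dx$ from below by $\frac{\muem}{2}\nnnn{\nabla u}^2$ up to an additive constant (using $\partial_t\varphi_\epsilon=\frac{1}{2}\mu_\epsilon\geq\frac{1}{2}\muem$ and~\eqref{eq:mubound}) and controlling $\int_\Omega fu\dx$ by the Hölder inequality and the Sobolev embedding $\hs \hookrightarrow L^{\alpha'(\cdot)}(\Omega)$, which is precisely where assumption~\eqref{eq:alpahmin} is needed.
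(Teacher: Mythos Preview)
Your proof is correct and in fact takes a shorter route than the paper. Both arguments share Step~1: the sequence $\{\E_\epsilon(u^n)\}_n$ is non-increasing and bounded below by $\E_\epsilon(\uex)$, hence convergent, so~\eqref{eq:keypr} forces $\nnnn{\nabla(u^{n+1}-u^n)}\to 0$. (The paper simply cites the existence of the global minimiser for the lower bound; your derivation via the integrated strong monotonicity is a harmless re-proof of what is already established.) The divergence comes afterwards. The paper proceeds indirectly: it applies~\eqref{eq:smonotone} to pairs $u^m,u^n$ and uses~\eqref{eq:kacanov2},~\eqref{eq:Abd} to show $\{u^n\}_n$ is Cauchy, obtains an abstract limit $\tilde u$, and then identifies $\tilde u=\uex$ by passing to the limit in $\dprod{\F_\epsilon(u^n),w}$ via the continuity of $\F_\epsilon$. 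You instead go straight to the target: the same ingredients~\eqref{eq:kacanov2} and~\eqref{eq:Abd} give $\norm{\F_\epsilon(u^n)}_{X'}\to 0$, and applying~\eqref{eq:smonotone} once to the pair $u^n,\uex$ (with $\F_\epsilon(\uex)=0$) yields $\nnnn{\nabla(u^n-\uex)}\leq \xiem^{-1}\norm{\F_\epsilon(u^n)}_{X'}\to 0$. Your approach is more economical---it bypasses the Cauchy step and the limit identification entirely---while the paper's Cauchy argument has the minor conceptual merit of not referring to $\uex$ until the very end, though this buys nothing here since existence and uniqueness are already in hand.
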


\begin{proof}
We will proceed along the lines of the proof of~\cite[Thm.~2.5]{HeidWihlerm2an:2022}, which is split into three parts.

\emph{Step 1:} Recall that $\E_\epsilon$ has a unique global minimiser $\uex$ in $X$ and that $\E_\epsilon$ decays along the generated sequence by our assumption~\eqref{eq:keypr}. Consequently, the sequence $\{\E_\epsilon(u^n)\}_n$ converges. Therefore, in light of~\eqref{eq:keypr}, we find that
\begin{align*}
0 \leq \nnnn{\nabla(u^{n+1}-u^n)}^2 \leq \frac{1}{\gamma}(\E_\epsilon(u^{n})-\E_\epsilon(u^{n+1})) \to 0 \qquad \text{as} \ n \to \infty;
\end{align*} 
i.e., $\left\{\nnnn{\nabla(u^{n+1}-u^n)}\right\}_n$ is a vanishing sequence. 

\emph{Step 2:} Next, we will verify that $\{u^n\}_n$ is a Cauchy sequence in $X$. Consequently, since $X$ is a Banach space, the sequence has a limit $\tilde{u} \in X$. The strong monotonicity of $\F_\epsilon$, cf.~\eqref{eq:smonotone}, yields that
\begin{multline*}
\xiem \nnnn{\nabla(u^m-u^n)}^2 \\
\begin{aligned} &\leq \dprod{\F_\epsilon(u^m)-\F_\epsilon(u^n),u^m-u^n} \\
&=\frac{1}{\delta(u^n)}\A_\epsilon[u^n](u^{n+1}-u^n)(u^m-u^n) -\frac{1}{\delta(u^m)}\A_\epsilon[u^m](u^{m+1}-u^m)(u^m-u^n), 
\end{aligned}
\end{multline*}
where we used~\eqref{eq:kacanov2} in the second step. Employing the uniform boundedness of $\A_\epsilon$, cf.~\eqref{eq:Abd}, we obtain
\begin{multline*}
\xiem\nnnn{\nabla(u^m-u^n)}^2 \\
\leq \frac{\muep}{\delta_{\min}} \left(\nnnn{\nabla(u^{n+1}-u^n)}+\nnnn{\nabla(u^{m+1}-u^m)}\right)\nnnn{\nabla(u^m-u^n)},
\end{multline*}
and thus by a simple manipulation
\begin{align*}
0 \leq \nnnn{\nabla(u^m-u^n)} \leq \frac{\muep}{\xiem \delta_{\min}}\left(\nnnn{\nabla(u^{n+1}-u^n)}+\nnnn{\nabla(u^{m+1}-u^m)}\right).
\end{align*}
Thanks to the first step, the right-hand side goes to zero as $m,n \to \infty$. Therefore, $\{u^n\}_n$ is indeed a Cauchy sequence.

\emph{Step 3:} It remains to show that $\tilde{u}=\uex$. To that end, we first recall~\eqref{eq:kacanov2}, which states that
\begin{align*}
\frac{1}{\delta(u^n)}\A_\epsilon[u^n](u^{n+1}-u^n)(w)=\dprod{\F_\epsilon(u^n),w} \qquad \text{for all} \ w \in X.
\end{align*}
Since $\delta(\cdot)\geq \delta_{\min}>0$ and the uniform boundedness of $\A_\epsilon$, cf.~\eqref{eq:Abd}, Step 1 yields that, for fixed $w \in X$, the left-hand side vanishes as $n \to \infty$. Therefore, we have for any $w \in X$ that
\begin{align*}
0=\lim_{n \to \infty} \dprod{\F_\epsilon(u^n),w}= \dprod{\F_\epsilon(\tilde{u}),w},
\end{align*}
where we used the continuity of $\F_\epsilon$ in the second equality. This, however, means that $\tilde{u}$ is a solution of the weak problem~\eqref{eq:weakrelaxed} on $X$. In turn, by the uniqueness of the solution, we have that $\tilde{u}=\uex$, which concludes the proof.
\end{proof}

\section{Convergence with respect to the relaxation and discretisation}

\subsection{Convergence with respect to the relaxation on discrete spaces} Let $X_N$ be a closed, finite dimensional subset of $W^{1,\infty}_0(\Omega) \subset \hs$. Note that, thanks to Proposition~\ref{eq:embedding}, we further have $X_N \subset \wop$. We will denote by $u_N^\star \in X_N$ the unique discrete solution of~\eqref{eq:weakpx} in $X_N$, and, for a fixed pair of cut-off parameters, $u_{N,\epsilon}^\star \in X_N$ signifies the unique solution of the discrete, relaxed problem~\eqref{eq:rwmuproblem} in $X_N$. In the sequel, we write $\epsilon \to \infty$ if $\epsm \to 0$ and $\epsp \to \infty$. We note that the set of cut-off parameters, which shall be denoted by 
\[\mathcal{A}:=\{(\epsm,\epsp):0<\epsm<1<\epsp<\infty\},\]
is a directed set with the ordering being given by the inclusion of the corresponding closed intervals; i.e., $\epsilon_1 \leq \epsilon_2$ if and only if $[\epsilon_{1,-},\epsilon_{1,+}] \subseteq [\epsilon_{2,-},\epsilon_{2,+}]$. Our goal is to prove that $\une \to \un$ as $\epsilon \to \infty$, for which end we need the following auxiliary result.

\begin{lemma} \label{lem:helpeps}
Let $\{v_\epsilon\}_\epsilon \subset X_N$ be uniformly bounded with respect to any norm on $X_N$. Then, we have that
\begin{align*}
\lim_{\epsilon \to \infty} \left|\E(v_\epsilon)-\E_\epsilon(v_\epsilon)\right|=0,
\end{align*}
where the convergence is in the sense of a net.
\end{lemma}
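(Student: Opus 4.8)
The plan is to split off the linear part and then localise the region where the two energies actually differ. Since the term $\int_\Omega f v_\epsilon \dx$ occurs in both $\E$ and $\E_\epsilon$, it cancels, and one is left to estimate
\[
\E(v_\epsilon)-\E_\epsilon(v_\epsilon)=\int_\Omega\Big(\tfrac{1}{p(x)}|\nabla v_\epsilon|^{p(x)}-\varphi_\epsilon(x,|\nabla v_\epsilon|^2)\Big)\dx .
\]
The crucial observation is that on the middle branch $\epsm^2\le t\le\epsp^2$ one has $\varphi_\epsilon(x,t)=\tfrac{1}{p(x)}t^{p(x)/2}$, so the integrand vanishes pointwise on the set $\{x\in\Omega:\epsm\le|\nabla v_\epsilon(x)|\le\epsp\}$; only the ``lower'' set $S_\epsilon:=\{x\in\Omega:|\nabla v_\epsilon(x)|<\epsm\}$ and the ``upper'' set $\{x\in\Omega:|\nabla v_\epsilon(x)|>\epsp\}$ contribute to the integral.

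Next I would exploit that $X_N$ is finite dimensional, so all norms on it are equivalent; the assumed uniform bound on $\{v_\epsilon\}_\epsilon$ therefore yields a constant $M>0$ with $\ltn{\nabla v_\epsilon}\le\norm{\nabla v_\epsilon}_{L^\infty(\Omega)}\le M$ for every $\epsilon\in\mathcal A$ (up to the measure of $\Omega$ in the first inequality). Consequently, as soon as $\epsp\ge M$ — which happens eventually along the net, since $\epsp\to\infty$ as $\epsilon\to\infty$ — the upper set is empty, and only $S_\epsilon$ remains.

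It then remains to estimate the integrand on $S_\epsilon$. There one has $0\le|\nabla v_\epsilon|<\epsm<1$ and $p_-\le p(x)\le p_+$, hence $\tfrac{1}{p(x)}|\nabla v_\epsilon|^{p(x)}\le\epsm^{p(x)}\le\epsm^{p_-}$. For $\varphi_\epsilon$ on the lower branch, the a priori dangerous factor $\epsm^{p(x)-2}$ — which blows up when $p(x)<2$ and $\epsm\to0$ — is harmless here because it multiplies $|\nabla v_\epsilon|^2<\epsm^2$, so that $\tfrac12\epsm^{p(x)-2}|\nabla v_\epsilon|^2\le\tfrac12\epsm^{p(x)}\le\tfrac12\epsm^{p_-}$, while $|(\tfrac1{p(x)}-\tfrac12)\epsm^{p(x)}|\le\epsm^{p_-}$. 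Altogether the integrand is bounded on $S_\epsilon$ by $C\epsm^{p_-}$ with $C$ depending only on $p_-$, whence $|\E(v_\epsilon)-\E_\epsilon(v_\epsilon)|\le C|\Omega|\,\epsm^{p_-}$; this converges to $0$ along the net because $\epsm\to0$ and $p_->1$.

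I expect the only genuine point of care to be this lower-branch estimate of $\varphi_\epsilon$: one must observe that the growth of $\epsm^{p(x)-2}$ for small exponents is always compensated by the constraint $|\nabla v_\epsilon|<\epsm$ that defines $S_\epsilon$. The upper branch needs nothing beyond the uniform $L^\infty$ bound that renders its contribution empty for $\epsp$ large, and the passage to the net limit is then immediate from $\epsm^{p_-}\to0$.
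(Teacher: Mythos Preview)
Your proposal is correct and follows essentially the same route as the paper's proof: cancel the linear term, use the equivalence of norms on the finite-dimensional space $X_N$ to obtain a uniform $L^\infty$-bound on $\nabla v_\epsilon$ (so the upper cut-off is eventually inactive), and then bound each of the three pieces of the integrand on the lower set $\{|\nabla v_\epsilon|<\epsm\}$ by a constant times $\epsm^{p_-}$. The paper arrives at the explicit constant $(\tfrac{1}{p_-}+1)|\Omega|$, but the structure and the key observation---that $\epsm^{p(x)-2}$ is compensated by $|\nabla v_\epsilon|^2<\epsm^2$---are identical to yours.
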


\begin{proof}
By definition of the operators $\E$ and $\E_\epsilon$, see~\eqref{eq:penergy} and~\eqref{eq:relaxedE}, respectively, we have that 
\begin{align*}
\left|\E(v_\epsilon)-\E_\epsilon(v_\epsilon)\right|=\left|\int_\Omega \frac{1}{p(x)} |\nabla v_\epsilon|^{p(x)} -\varphi_\epsilon(x,|\nabla v_\epsilon|^2) \dx\right|.
\end{align*}
Since the set $\{v_\epsilon\}_\epsilon$ is uniformly bounded in $X_N$, and by the equivalence of norms in finite dimensional spaces, we have that $\norm{\nabla v_\epsilon}_{L^{\infty}(\Omega)} \leq C$ for some positive constant $C>0$ independent of $\epsilon$. In turn, for $\epsilon$ big enough we have that $\norm{\nabla v_\epsilon}_{L^{\infty}(\Omega)} < \epsp$, and thus the upper cut-off parameter can be neglected in the limit. Consequently, a straightforward calculation reveals that, for $\epsilon$ big enough,
\begin{align*}
\left|\E(v_\epsilon)-\E_\epsilon(v_\epsilon)\right| &= \left|\int_{\Omega_{\epsilon,-}} \frac{1}{p(x)} |\nabla v_\epsilon|^{p(x)}-\frac{1}{2}|\nabla v_\epsilon|^{2}\epsm^{p(x)-2}-\left(\frac{1}{p(x)}-\frac{1}{2}\right)\epsm^{p(x)} \dx\right|\\
& \leq \int_{\Omega_{\epsilon,-}} \frac{1}{p(x)}|\nabla v_\epsilon|^{p(x)} +\frac{1}{2} |\nabla v_\epsilon|^{2}\epsm^{p(x)-2}+\left|\frac{1}{p(x)}-\frac{1}{2}\right| \epsm^{p(x)} \dx,
\end{align*} 
where $\Omega_{\epsilon,-}:=\{x \in \Omega: |\nabla v_\epsilon|< \epsm\}$. In turn, since $0<\epsm<1$ and $p(x)>1$ for all $x \in \Omega$, we immediately find that
\begin{align} \label{eq:bound1}
\left|\E(v_\epsilon)-\E_\epsilon(v_\epsilon)\right| \leq \left(\frac{1}{p_{-}}+1\right) \epsm^{p_{-}} |\Omega|,
\end{align}
with $|\Omega|$ signifying the Lebesgue measure of $\Omega$. Hence, if $\epsilon \to \infty$, and in turn $\epsilon_{-} \to 0$, the right-hand side in~\eqref{eq:bound1} vanishes, which proves the claim.
\end{proof}

This result paves the way for the proof of the following convergence statement.

\begin{theorem} \label{thm:relaxationconvergence}
Let $f$ satisfy {\rm ($A_\alpha$)} with \eqref{eq:alpahmin}. Then, for any given closed, finite dimensional subspace $X_N \subset W^{1,\infty}_{0}(\Omega)$, we have that 
\begin{align*}
\lim_{\epsilon \to \infty} \E_\epsilon(u_{\epsilon,N}^\star)=\E(u_{N}^\star),
\end{align*}
where $u_{\epsilon,N}^\star$ and $u_N^\star$ are the unique discrete solutions of~\eqref{eq:rwmuproblem} and~\eqref{eq:weakpx} in $X_N$, respectively.
\end{theorem}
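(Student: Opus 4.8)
The plan is to sandwich $\E_\epsilon(\une)$ between two quantities, both of which converge to $\E(\un)$, by exploiting the two minimisation properties $\un = \argmin_{X_N}\E$ and $\une = \argmin_{X_N}\E_\epsilon$ together with Lemma~\ref{lem:helpeps}. The first step is to obtain a uniform bound on the net $\{\une\}_\epsilon$ in $X_N$, which is what Lemma~\ref{lem:helpeps} requires in order to be applied with $v_\epsilon = \une$. For this I would test the relaxed equation~\eqref{eq:rwmuproblem} (equivalently $\F_\epsilon(\une)=0$) with $v=\une$, use the coercivity~\eqref{eq:coercive} on the left, i.e. $\muem\ltn{\nabla\une}^2 \le \A_\epsilon[\une](\une)(\une) = \ell_f(\une)$, and bound the right-hand side by H\"older's inequality (Proposition~\ref{eq:holder}) together with the Sobolev embedding (Proposition~\ref{prop:embedding}) under assumption~\eqref{eq:alpahmin}. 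The subtlety is that $\muem = \min\{\epsm^{p_+-2},\epsp^{p_--2}\}$ degenerates as $\epsilon\to\infty$ in general; however, for $\epsilon$ large, $\epsm<1$ forces $\epsm^{p_+-2}$ to stay bounded below only if $p_+\le 2$, and $\epsp^{p_-2}$ only if $p_-\ge 2$, so in the genuinely mixed case $\muem\to 0$. I would circumvent this by instead comparing energies directly: since $\une$ minimises $\E_\epsilon$, we have $\E_\epsilon(\une)\le\E_\epsilon(0)=0$ for every admissible pair, and the coercivity of the relaxed energy $\E_\epsilon$ in the form $\E_\epsilon(v)\ge \frac1{p_+}\int_{|\nabla v|\le\epsp}|\nabla v|^{p(x)}\dx + (\text{cut-off terms}) - \ell_f(v)$ will still give a bound on $\|\nabla\une\|_{L^{p(\cdot)}}$ that is uniform once $\epsm$ is small and $\epsp$ is large; combined with $X_N$ finite-dimensional, where all norms are equivalent, this yields $\|\nabla\une\|_{L^\infty(\Omega)}\le C$ uniformly. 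The hard part of the whole proof lies precisely here: controlling $\{\une\}_\epsilon$ uniformly despite the possible degeneration of the ellipticity constants, and I would spend most of the argument making this step airtight.

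Granted the uniform bound, Lemma~\ref{lem:helpeps} applies to give $|\E(\une)-\E_\epsilon(\une)|\to 0$ as $\epsilon\to\infty$. Next I would derive the upper estimate. Since $\un\in X_N\subset W^{1,\infty}_0(\Omega)$ is a single fixed element, $\|\nabla\un\|_{L^\infty(\Omega)}<\epsp$ and $|\nabla\un|>\epsm$ fails only on a set shrinking in the obvious way as $\epsm\to 0$; in fact $\varphi_\epsilon(x,|\nabla\un|^2)\to\frac1{p(x)}|\nabla\un|^{p(x)}$ pointwise and dominatedly, so $\E_\epsilon(\un)\to\E(\un)$. Combining with the minimality $\E_\epsilon(\une)\le\E_\epsilon(\un)$ gives
\begin{align*}
\limsup_{\epsilon\to\infty}\E_\epsilon(\une) \le \lim_{\epsilon\to\infty}\E_\epsilon(\un) = \E(\un).
\end{align*}

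For the lower estimate I would use the other minimality: $\E(\un)\le\E(\une)$ because $\un$ minimises $\E$ over $X_N$ and $\une\in X_N$. Hence
\begin{align*}
\E(\un) \le \E(\une) = \E_\epsilon(\une) + \bigl(\E(\une)-\E_\epsilon(\une)\bigr) \le \E_\epsilon(\une) + |\E(\une)-\E_\epsilon(\une)|,
\end{align*}
and by Lemma~\ref{lem:helpeps} the last term tends to $0$, so $\liminf_{\epsilon\to\infty}\E_\epsilon(\une)\ge\E(\un)$. Putting the $\limsup$ and $\liminf$ bounds together yields $\lim_{\epsilon\to\infty}\E_\epsilon(\une)=\E(\un)$, which is the claim. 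The only genuinely delicate ingredient, as noted, is the uniform boundedness of $\{\une\}_\epsilon$; everything else is a matter of combining the two variational inequalities with the energy-comparison lemma and a dominated-convergence argument for the fixed function $\un$.
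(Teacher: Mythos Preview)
Your sandwich argument is exactly the paper's strategy: the upper estimate $\limsup_{\epsilon\to\infty}\E_\epsilon(\une)\le\E(\un)$ comes from the minimality of $\une$ together with $\E_\epsilon(\un)\to\E(\un)$ (Lemma~\ref{lem:helpeps} applied to the constant net $v_\epsilon\equiv\un$), and the lower estimate from the minimality of $\un$ together with $|\E(\une)-\E_\epsilon(\une)|\to 0$ (Lemma~\ref{lem:helpeps} applied to $v_\epsilon=\une$). You also correctly isolate the uniform boundedness of $\{\une\}_\epsilon$ as the only substantive step.

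The difference is in how that bound is obtained. You test the relaxed equation, note that the global coercivity constant $\muem$ degenerates as $\epsilon\to\infty$, and therefore switch to an energy argument based on $\E_\epsilon(\une)\le\E_\epsilon(0)$. The paper instead \emph{stays} with testing the equation but replaces $\muem$ by the pointwise, $\epsilon$-independent lower bound
\[
\mu_\epsilon(x,t^2)\,t^2\ \ge\ t^{\se}\quad (t\ge 1),\qquad \mu_\epsilon(x,t^2)\,t^2\ \ge\ t^{\re}\quad (t<1),
\]
with $(\se,\re)$ depending only on the position of $p_{-},p_{+}$ relative to $2$. Splitting $\Omega$ into $\{|\nabla\une|<1\}$ and its complement, a short case distinction combined with the equivalence of norms on $X_N$ then yields the uniform bound. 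So you abandoned the equation-testing route one step too early; the point is not that testing fails, but that one must use a sharper pointwise lower bound in place of $\muem$.

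Two further remarks on your sketch. First, $\E_\epsilon(0)=\int_\Omega\bigl(\tfrac1{p(x)}-\tfrac12\bigr)\epsm^{p(x)}\dx\neq 0$; it is, however, uniformly bounded, so the inequality $\E_\epsilon(\une)\le C_0$ survives. Second, and more seriously, your proposed coercivity estimate restricts the integral to $\{|\nabla v|\le\epsp\}$ and leaves the complement as unspecified ``cut-off terms''; since you do not yet know $\|\nabla\une\|_{L^\infty}<\epsp$, this is a genuine gap. It can be closed: integrating the same pointwise bound $\mu_\epsilon(x,t^2)t^2\ge t^{\se}$ for $t\ge 1$ gives $\varphi_\epsilon(x,t^2)\ge\tfrac1{\se}t^{\se}-C$ uniformly in $\epsilon$, after which your energy route does go through (again using norm equivalence on $X_N$ to control $\ell_f$). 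Thus your approach is ultimately viable, but it hinges on the very $\epsilon$-free pointwise estimate that the paper exploits directly.
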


\begin{proof}
Lemma~\ref{lem:helpeps}, for $v_\epsilon \equiv u_N^\star$ independent of $\epsilon$, immediately implies that $\lim_{\epsilon \to \infty} \E_\epsilon(u_N^\star)=\E(u_N^\star)$. Moreover, as $u_N^\star$ is the global minimiser of $\E$ in $X_N$, we further find that
\begin{align} \label{eq:helpthm1}
\liminf_{\epsilon \to \infty} \E(u_{\epsilon,N}^\star) \geq \E(u_N^\star)=\limsup_{\epsilon \to \infty} \E_\epsilon(u_N^\star) \geq \limsup_{\epsilon \to \infty} \E_\epsilon(u_{\epsilon,N}^\star),
\end{align}
where we used in the last inequality that $u_{\epsilon,N}^\star$ is the global minimiser of $\E_\epsilon$ in $X_N$. Consequently, if we can show that 
\begin{align*}
\liminf_{\epsilon \to \infty} \E(u_{\epsilon,N}^\star)=\liminf_{\epsilon \to \infty} \E_\epsilon(u_{\epsilon,N}^\star),
\end{align*}
then~\eqref{eq:helpthm1} implies the claim. In particular, in light of Lemma~\ref{lem:convergence}, it is only left to verify that the net $\{u_{\epsilon,N}^\star\}_{\epsilon} \subset X_N$ is uniformly bounded. So take any $\epsilon \in \mathcal{A}$, i.e., a pair of cut-off parameters with $0<\epsm<1<\epsp<\infty$. By assumption, $u_{\epsilon,N}^\star$ is a solution of the relaxed, discrete problem~\eqref{eq:rwmuproblem}. Therefore, and thanks to H\"{o}lders inequality, we have that
\begin{multline} \label{eq:RHSbound}
\int_\Omega \mu_\epsilon(x,|\nabla u_{N,\epsilon}^\star|^2)|\nabla u_{N,\epsilon}^\star|^2 \dx = \int_\Omega f u_{N,\epsilon}^\star \dx \\
\leq \norm{f}_{L^{\nicefrac{6}{5}}(\Omega)} \norm{u_{N,\epsilon}^\star}_{L^6(\Omega)} \leq C_1 \norm{f}_{L^{\nicefrac{6}{5}}(\Omega)} \norm{\nabla u_{N,\epsilon}^\star}_{L^2(\Omega)};
\end{multline}
here, $C_1>0$ is the constant from the continuous embedding $\hs \hookrightarrow L^6(\Omega)$, which is independent of $\epsilon$ and $X_N$. To determine a lower bound of the left-hand side in~\eqref{eq:RHSbound}, we introduce the sets
\begin{align*} 
\Omega_{\epsilon,-}:=\{x \in \Omega: |\nabla u_{N,\epsilon}^\star| < 1\} \quad \text{and} \quad \Omega_{\epsilon,+}:=\{x \in \Omega: |\nabla u_{N,\epsilon}^\star| \geq 1\}.
\end{align*}
Then a straightforward calculation reveals that 
\begin{align*}
\mu_\epsilon(x,|\nabla u_{N,\epsilon}^\star|^2)|\nabla u_{N,\epsilon}^\star|^2 \geq |\nabla u_{N,\epsilon}^\star|^{\re} \qquad \text{for all} \ x \in \omegamin
\end{align*}
and 
\begin{align*}
\mu_\epsilon(x,|\nabla u_{N,\epsilon}^\star|^2)|\nabla u_{N,\epsilon}^\star|^2 \geq |\nabla u_{N,\epsilon}^\star|^{\se} \qquad \text{for all} \ x \in \omegaplus,
\end{align*}
where 
\begin{align*}
(\se,\re):=
\begin{cases}
(p_{-},2) & 1<p_{-} \leq p_{+} < 2, \\
(p_{-},p_{+}) & 1<p_{-} \leq 2 \leq p_{+} < \infty, \\
(2,p_{+}) & 2 < p_{-} \leq p_{+} < \infty.
\end{cases}
\end{align*}
As a consequence, we have
\begin{align} \label{eq:LHSbound}
\int_\Omega \mu_\epsilon(x,|\nabla u_{N,\epsilon}^\star|^2)|\nabla u_{N,\epsilon}^\star|^2 \dx \geq \norm{\nabla u_{N,\epsilon}^\star}_{L^{\re}(\omegamin)}^{\re}+\norm{\nabla u_{N,\epsilon}^\star}_{L^{\se}(\omegaplus)}^{\se}.
\end{align}
In the following, we shall distinguish two cases:
\begin{enumerate}[(a)]
\item If $\norm{\nabla u_{N,\epsilon}^\star}_{L^{\se}(\omegamin)} \geq \norm{\nabla u_{N,\epsilon}^\star}_{L^{\se}(\omegaplus)}$, then, by definition of the set $\omegamin$,
\begin{align*}
\norm{\nabla u_{N,\epsilon}^\star}_{L^{\se}(\omegaplus)}^{\se} \leq \int_{\omegamin} |\nabla u_{N,\epsilon}^\star|^{\se} \dx \leq |\omegamin| \leq |\Omega|,
\end{align*}
and, in turn,
\begin{align*}
\norm{\nabla u_{N,\epsilon}^\star}_{L^{\se}(\Omega)}^{\se}=\norm{\nabla u_{N,\epsilon}^\star}_{L^{\se}(\omegaplus)}^{\se}+\norm{\nabla u_{N,\epsilon}^\star}_{L^{\se}(\omegamin)}^{\se} \leq 2 |\Omega|.
\end{align*}
In particular, we obtained the uniform bound
\begin{align*}
\norm{\nabla u_{N,\epsilon}^\star}_{L^{\se}(\Omega)} \leq (2 |\Omega|)^{\nicefrac{1}{\se}}.
\end{align*}
\item Now let us assume that $\norm{\nabla u_{N,\epsilon}^\star}_{L^{\se}(\omegamin)} \leq \norm{\nabla u_{N,\epsilon}^\star}_{L^{\se}(\omegaplus)}$. Then, in combination with the equivalence of norms in the finite dimensional space $X_N$, we find that
\begin{align*}
\norm{\nabla u_{N,\epsilon}^\star}_{L^2(\Omega)} \leq C_2 \norm{\nabla u_{N,\epsilon}^\star}_{L^{\se}(\Omega)} \leq 2^{\nicefrac{1}{\se}} C_2 \norm{\nabla u_{N,\epsilon}^\star}_{L^{\se}(\omegaplus)},
\end{align*}
for some constant $C_2>0$ independent of $\epsilon$ (but, possibly, depending on the specific finite dimensional space $X_N$). In combination with~\eqref{eq:RHSbound} this yields
\begin{align} \label{eq:finalrhsbound}
\int_\Omega \mu_\epsilon(x,|\nabla u_{N,\epsilon}^\star|^2)|\nabla u_{N,\epsilon}^\star|^2 \dx \leq 2^{\nicefrac{1}{\se}}C_1C_2 \norm{f}_{L^{\nicefrac{6}{5}}(\Omega)} \norm{\nabla u_{N,\epsilon}^\star}_{L^{\se}(\omegaplus)}.
\end{align}
On the other hand we have, thanks to ~\eqref{eq:LHSbound},
\begin{align} \label{eq:finallhsbound}
\int_\Omega \mu_\epsilon(x,|\nabla u_{N,\epsilon}^\star|^2)|\nabla u_{N,\epsilon}^\star|^2 \dx \geq \norm{\nabla u_{N,\epsilon}^\star}_{L^{\se}(\omegaplus)}^{\se}
\end{align}
Combining \eqref{eq:finalrhsbound} and~\eqref{eq:finallhsbound} yields
\begin{align*}
\norm{\nabla u_{N,\epsilon}^\star}_{L^{\se}(\omegaplus)} \leq \left(2^{\nicefrac{1}{\se}}C_1C_2 \norm{f}_{L^{\nicefrac{6}{5}}(\Omega)} \right)^{\nicefrac{1}{(\se-1)}},
\end{align*}
and, in turn,
\begin{align*} 
\norm{\nabla u_{N,\epsilon}^\star}_{L^{\se}(\Omega)} \leq 2^{\nicefrac{1}{\se}}\left(2^{\nicefrac{1}{\se}}C_1C_2 \norm{f}_{L^{\nicefrac{6}{5}}(\Omega)} \right)^{\nicefrac{1}{(\se-1)}};
\end{align*}
i.e., we found a uniform bound.
\end{enumerate}
Since (a) and (b) cover all the possible cases, we verified that $\{u_{\epsilon,N}^\star\}_{\epsilon}$ is indeed uniformly bounded (with respect to the $W_0^{1,\se}$-norm), which concludes the proof.
\end{proof}

\subsection{Convergence with respect to a sequence of hierarchical discrete spaces} 
Now let us assume that we have a sequence of hierarchical discrete spaces $\{X_N\}_N$ such that $X_N \subset W^{1,\infty}_0(\Omega)$ for all $N \in \mathbb{N}$, and $X_0 \subset X_1 \subset \dotsc \subset X_N \subset X_{N+1} \subset \dotsc W^{1,\infty}_0(\Omega)$. We aim to derive the convergence of the discrete solutions $u_N^\star$ of~\eqref{eq:weakpx} in $X_N$, for $N \in \mathbb{N}$, to the continuous solution $u^\star \in \wop$. For that purpose, we need the following preliminary result.

\begin{lemma} \label{lem:convergence}
If $v_N,v \in \wop$ with $\lim_{N \to \infty}\norm{\nabla(v_N-v)}_{L^{p(\cdot)}(\Omega)}=0$, then, for a suitable subsequence $\{v_{N_j}\}_j$,
\begin{align} \label{eq:pintcon}
\lim_{j \to \infty} \int_\Omega \frac{1}{p(x)}\left(|\nabla v_{N_j}|^{p(x)}-|\nabla v|^{p(x)}\right) \dx =0.
\end{align}
\end{lemma}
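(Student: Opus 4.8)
The plan is to exploit the equivalence between norm convergence in $L^{p(\cdot)}(\Omega)$ and the combination of convergence in measure together with convergence of norms, as recorded in Proposition~\ref{prop:equiv}. Since $\nabla v_N \to \nabla v$ in $L^{p(\cdot)}(\Omega)$ by hypothesis, in particular $|\nabla v_N| \to |\nabla v|$ in $L^{p(\cdot)}(\Omega)$ as well (the map $g \mapsto |g|$ is $1$-Lipschitz on vector fields, hence norm-continuous on $L^{p(\cdot)}(\Omega)^d$). Proposition~\ref{prop:equiv} then gives that $|\nabla v_N|$ converges to $|\nabla v|$ in measure on $\Omega$. From convergence in measure one can extract a subsequence $\{v_{N_j}\}_j$ along which $|\nabla v_{N_j}| \to |\nabla v|$ pointwise almost everywhere, and hence also $|\nabla v_{N_j}|^{p(x)} \to |\nabla v|^{p(x)}$ for a.e.\ $x\in\Omega$, since $t \mapsto t^{p(x)}$ is continuous for each fixed $x$.

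The next step is to upgrade this almost-everywhere convergence to convergence of the integrals in~\eqref{eq:pintcon}. The natural tool is a generalised dominated convergence theorem (Vitali's convergence theorem), for which I need to check that the sequence $\big\{\tfrac{1}{p(x)}|\nabla v_{N_j}|^{p(x)}\big\}_j$ is uniformly integrable on $\Omega$. This follows from the modular--norm relationship for variable-exponent Lebesgue spaces: the modular $\rho(g) := \int_\Omega |g(x)|^{p(x)}\dx$ is bounded along the sequence because $\{\norm{\nabla v_{N_j}}_{L^{p(\cdot)}(\Omega)}\}_j$ is bounded (it converges to $\norm{\nabla v}_{L^{p(\cdot)}(\Omega)}$), and in fact one can say more: since $\norm{\nabla v_{N_j}}_{L^{p(\cdot)}(\Omega)} \to \norm{\nabla v}_{L^{p(\cdot)}(\Omega)}$, the modulars $\rho(\nabla v_{N_j})$ converge to $\rho(\nabla v)$ as well (using that $\rho$ and the Luxemburg norm determine each other continuously away from the critical values, or, more cleanly, because norm convergence in $L^{p(\cdot)}$ implies modular convergence). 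Combining pointwise a.e.\ convergence of the integrands with convergence of their integrals (equivalently, with the uniform integrability supplied by the $L^{p_-}$-domination on sets of small measure together with equi-boundedness of the modulars) yields, via Vitali, that $\int_\Omega \tfrac{1}{p(x)}|\nabla v_{N_j}|^{p(x)}\dx \to \int_\Omega \tfrac{1}{p(x)}|\nabla v|^{p(x)}\dx$, which is exactly~\eqref{eq:pintcon}.

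An alternative, slightly slicker route avoiding a separate uniform-integrability check: work directly with the modular. Since $\tfrac{1}{p_+} \le \tfrac{1}{p(x)} \le \tfrac{1}{p_-}$, it suffices to show $\int_\Omega |\nabla v_{N_j}|^{p(x)}\dx \to \int_\Omega |\nabla v|^{p(x)}\dx$; but modular convergence is implied by norm convergence in $L^{p(\cdot)}(\Omega)$ (a standard fact for variable-exponent spaces, see~\cite[Sec.~2.1]{DieningEtAl:2011}), and then the weighted version follows by squeezing $\tfrac{1}{p(x)}$ between its bounds and invoking pointwise a.e.\ convergence of $|\nabla v_{N_j}|^{p(x)}$ along the extracted subsequence together with Vitali's theorem applied to the (now equi-integrable, because modular-convergent) sequence. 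Either way, the subsequence is forced on us precisely by the passage through convergence in measure $\Rightarrow$ pointwise a.e.\ convergence.

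The main obstacle is the uniform integrability of $\{|\nabla v_{N_j}|^{p(x)}\}_j$: pointwise a.e.\ convergence alone does not give convergence of integrals, and one genuinely needs to rule out mass escaping to infinity. The cleanest way to handle this is to observe that norm convergence $\norm{\nabla v_{N_j}}_{L^{p(\cdot)}(\Omega)} \to \norm{\nabla v}_{L^{p(\cdot)}(\Omega)}$ forces modular convergence $\rho(\nabla v_{N_j}) \to \rho(\nabla v)$; combined with a.e.\ convergence of the non-negative integrands, the Brezis--Lieb lemma (or directly the equi-integrability criterion of Vitali, using that a sequence of non-negative functions converging a.e.\ with convergent integrals is automatically uniformly integrable) closes the argument. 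No estimate here is delicate; the only care needed is to first reduce to $|\nabla v_N| \to |\nabla v|$ and then apply Proposition~\ref{prop:equiv} in the right direction.
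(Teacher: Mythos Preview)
Your approach is essentially the same as the paper's: use Proposition~\ref{prop:equiv} to obtain convergence in measure of $|\nabla v_N|$, extract an a.e.\ convergent subsequence, and then pass to the limit in the integral. The paper closes the last step with a direct appeal to dominated convergence via the bound $\big||\nabla v_N|^{p(x)} - |\nabla v|^{p(x)}\big| \le |\nabla v_N|^{p(x)} + |\nabla v|^{p(x)}$, whereas your route through modular convergence and Vitali/Brezis--Lieb is a touch more careful (the paper's dominating function depends on $N$, so one tacitly needs precisely the modular convergence you spell out to make the generalised DCT apply).
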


\begin{proof}
A simple manipulation reveals that
\begin{align} \label{eq:firstin}
0 \leq \left|\int_\Omega \frac{1}{p(x)}\left(|\nabla v_{N}|^{p(x)}-|\nabla v|^{p(x)}\right) \dx\right| \leq \frac{1}{p_{-}}\int_\Omega \left||\nabla v_N|^{p(x)}-|\nabla v|^{p(x)}\right| \dx,
\end{align}
thus it suffices to show that the integral on the right-hand side vanishes for a subsequence $\{N_j\}_j$. To that end, we first note that 
\begin{align*} 
0 \leq \left||\nabla v_N|^{p(x)}-|\nabla v|^{p(x)}\right| \leq |\nabla v_N|^{p(x)}+|\nabla v|^{p(x)}=:g(x) \in L^1(\Omega).
\end{align*}
Furthermore, in light of Proposition~\ref{prop:equiv}, the assumption on the convergence in $\wop$ implies that $|\nabla v_N|$ converges in measure to $|\nabla v|$. In turn, there exists a subsequence $\{v_{N_j}\}_j$ so that $|\nabla v_{N_j}|$ converges almost everywhere to $|\nabla v|$; we refer, e.g., to~\cite[Prop.~3.1.3]{Cohn:2013}. In particular, we have that
\begin{align*} 
\left||\nabla v_{N_j}(x)|^{p(x)}-|\nabla v(x)|^{p(x)}\right| \to 0 \qquad \text{for almost every} \ x \in \Omega.
\end{align*}
Consequently, the dominated convergence theorem implies that 
\begin{align*}
\int_\Omega \left||\nabla v_{N_j}|^{p(x)}-|\nabla v|^{p(x)}\right| \dx \to 0 \quad \text{as} \ j \to \infty,
\end{align*}
which, together with~\eqref{eq:firstin}, proves the claim.
\end{proof}

\begin{theorem} \label{thm:convergenceinN}
Assume that the union of the discrete spaces $X_N$, $N \in \mathbb{N}$, is dense in $\wop$; i.e.,
\begin{align} \label{eq:density}
\overline{\bigcup_{N \in \mathbb{N}} X_N}^{\norm{\cdot}_{W_0^{1,p(\cdot)}(\Omega)}}=W_0^{1,p(\cdot)}(\Omega).
\end{align}
Then, if $f$ satisfies {\rm ($A_\alpha$)} with \eqref{eq:alpahmin}, we have that $\E(u_N^\star) \to \E(u^\star)$ as $N \to \infty$. 
\end{theorem}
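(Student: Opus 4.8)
The plan is to use that, because the discrete spaces are nested and each is contained in $\wop$, the sequence $\{\E(\un)\}_N$ is non-increasing and bounded below by $\E(u^\star)$, so it converges to some $L\ge\E(u^\star)$; the whole task then reduces to establishing the matching upper bound $L\le\E(u^\star)$. To get monotonicity I would note that minimality of $\un$ over $X_N\subseteq X_{N+1}$ forces $\E(u_{N+1}^\star)\le\E(\un)$, and that $\E(\un)\ge\min_{\wop}\E=\E(u^\star)$ since $X_N\subset\wop$. Thus $L:=\lim_{N\to\infty}\E(\un)$ exists and $L\ge\E(u^\star)$.

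For the reverse inequality I would build a recovery sequence from the density hypothesis~\eqref{eq:density}: choose a strictly increasing sequence of indices $\{N_k\}_k$ and $w_k\in X_{N_k}$ with $\norm{\nabla(w_k-u^\star)}_{\lps}\to0$ (pick an element within distance $1/k$ of $u^\star$ in the $\wop$-norm, then enlarge the index using nestedness so it increases). Minimality of $u_{N_k}^\star$ over $X_{N_k}$ gives
\begin{align*}
\E(u_{N_k}^\star)\le\E(w_k),\qquad k\in\mathbb{N},
\end{align*}
hence $L=\lim_k\E(u_{N_k}^\star)\le\liminf_k\E(w_k)$. It then remains to pass to the limit in $\E(w_k)=\int_\Omega\frac1{p(x)}|\nabla w_k|^{p(x)}\dx-\int_\Omega f w_k\dx$. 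For the linear term, assumption~($A_\alpha$) yields $\nicefrac1{\alpha'(x)}=1-\nicefrac1{\alpha(x)}>\nicefrac1{p^\star(x)}$, so $\alpha'\le p^\star$ a.e.; Proposition~\ref{prop:embedding} then gives the continuous embedding $\wop\hookrightarrow L^{\alpha'(\cdot)}(\Omega)$, so $\norm{w_k-u^\star}_{L^{\alpha'(\cdot)}(\Omega)}\to0$, and the H\"older inequality (Proposition~\ref{eq:holder}) with $f\in L^{\alpha(\cdot)}(\Omega)$ gives $\int_\Omega f w_k\dx\to\int_\Omega f u^\star\dx$. For the modular term I would apply Lemma~\ref{lem:convergence} with $v_N\equiv w_k$, $v\equiv u^\star$, extracting a subsequence $\{w_{k_j}\}_j$ along which $\int_\Omega\frac1{p(x)}(|\nabla w_{k_j}|^{p(x)}-|\nabla u^\star|^{p(x)})\dx\to0$. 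Combining the two limits, $\E(w_{k_j})\to\E(u^\star)$, and therefore $L\le\lim_j\E(u_{N_{k_j}}^\star)\le\lim_j\E(w_{k_j})=\E(u^\star)$, which together with Step~1 gives $L=\E(u^\star)$, i.e.\ $\E(\un)\to\E(u^\star)$.

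The main obstacle is the convergence of the modular (energy-density) term: norm convergence $w_k\to u^\star$ in $\wop$ does not immediately yield convergence of $\int_\Omega\frac1{p(x)}|\nabla w_k|^{p(x)}\dx$, and this is precisely what Lemma~\ref{lem:convergence} supplies, at the price of passing to a subsequence (via convergence in measure, an a.e.-convergent subsequence, and dominated convergence). That subsequence extraction is harmless here because the full sequence $\{\E(\un)\}_N$ is already known to converge, so controlling its limit along any subsequence suffices. The linear term is routine once the embedding exponent inequality $\alpha'\le p^\star$ is read off from~($A_\alpha$).
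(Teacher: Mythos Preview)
Your proof is correct and follows essentially the same route as the paper: monotonicity of $\E(\un)$ from nestedness reduces the question to a subsequence, a recovery sequence from density combined with minimality gives the upper bound, Lemma~\ref{lem:convergence} handles the modular term, and the linear term is controlled via ($A_\alpha$), H\"older, and the Sobolev embedding. The only cosmetic difference is that the paper routes the linear estimate through the intermediate exponent $s$ with $\nicefrac{1}{s}=\nicefrac{1}{\alpha}+\nicefrac{1}{p^\star}$ and then embeds $L^{s(\cdot)}\hookrightarrow L^1$, whereas you equivalently observe $\alpha'<p^\star$ and embed $\wop\hookrightarrow L^{\alpha'(\cdot)}$ directly.
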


\begin{proof}
We want to show that 
\begin{align} \label{eq:thm1}
0 \leq \E(u_N^\star)-\E(u^\star) \to 0 \quad \text{as} \ N \to \infty;
\end{align}
the first inequality holds true since $u^\star$ is the global minimiser of $\E$ in $\wop$ and  $u_N^\star \in W_0^{1,\infty}(\Omega) \subset \wop$ for all $N \in \mathbb{N}$. Moreover, since $\un$ is the global minimiser of $\E$ in $X_N$ and by the nestedness of the discrete spaces, i.e., $X_N \subset X_{N+1}$ for all $N \in \mathbb{N}$, we further have that the difference $\E(u_N^\star)-\E(u^\star)$ is decreasing. As a consequence, it is sufficient to verify~\eqref{eq:thm1} for any subsequence of $\{u_N^\star\}_N$. Next we note that the density~\eqref{eq:density}, in combination with the nestedness of the discrete spaces, yields the existence of a sequence $\mathfrak{u}_N \in X_N$, $N \in \mathbb{N}$, such that
\begin{align} \label{eq:convergencep}
\norm{\mathfrak{u}_N-u^\star}_{\wop} \to 0 \quad \text{as} \ N \to \infty.
\end{align}
Hence, we may exploit Lemma~\ref{lem:convergence}; let us denote by $\{\mathfrak{u}_{N_j}\}_j$ the corresponding subsequence satisfying~\eqref{eq:pintcon}. Since $u_{N_j}^\star$ is the global energy minimiser of $\E$ in $X_{N_j}$, it holds that
\begin{multline*} 
0 \leq \E(u_{N_j}^\star)-\E(u^\star) \leq \E(\mathfrak{u}_{N_j})-\E(u^\star) \\
= \int_\Omega \frac{1}{p(x)} \left(|\nabla \mathfrak{u}_{N_j}|^{p(x)}-|\nabla u^\star|^{p(x)} \right) \dx + \int_\Omega f (\mathfrak{u}_{N_j}-u^\star) \dx. 
\end{multline*}
The first term on the right-hand side above vanishes as $j \to \infty$ since $\{\mathfrak{u}_{N_j}\}_j$ satisfies~\eqref{eq:pintcon}. It remains to verify that the same holds true for the second term. For that purpose we first note that for $s \in C_{+}(\overline{\Omega})$ given by
\begin{align*}
\frac{1}{p^\star(x)}+\frac{1}{\alpha(x)}=\frac{1}{s(x)}, \qquad x \in \Omega
\end{align*}
we have $s(x) \geq 1$. Therefore, invoking Propositions~\ref{eq:embedding} and~\ref{eq:holder} leads to
\begin{align*}
\int_\Omega f (\mathfrak{u}_{N_j}-u^\star) \dx \leq C \norm{f(\mathfrak{u}_{N_j}-u^\star)}_{L^{s(\cdot)}(\Omega)} \leq 2 C \norm{f}_{L^{\alpha(\cdot)}(\Omega)} \norm{\mathfrak{u}_{N_j}-u^\star}_{L^{p^\star(\cdot)}(\Omega)},
\end{align*}
for a constant $C>0$ independent of $N_j$. Thanks to Proposition~\ref{prop:embedding}, we further obtain that
\begin{align} \label{eq:72}
\int_\Omega f (\mathfrak{u}_{N_j}-u^\star) \dx \leq 2 \widetilde{C} \norm{f}_{L^{\alpha(\cdot)}(\Omega)} \norm{\nabla(\mathfrak{u}_{N_j}-u^\star)}_{L^{p(\cdot)}(\Omega)},
\end{align}
where $\tilde{C}>0$ is still a positive constant independent of $N_j$. Thus the right-hand side of~\eqref{eq:72} vanishes thanks to~\eqref{eq:convergencep}; this concludes the proof.
\end{proof}



\section{Numerical experiments}

In this section, we will perform some numerical experiments to assess our theoretical findings. More specifically, we want to numerically investigate the convergence with respect to the number of damped Ka\v{c}anov iteration steps, the relaxation parameter, as well as the mesh size. For the construction of our discrete subspaces, we will consider a $P1$-finite element scheme. In particular, we consider a sequence of shape-regular conforming triangulations of $\Omega$, such that $\mathcal{T}_{n+1}$ is obtained by a refinement of $\mathcal{T}_{n}$. Then, the corresponding conforming finite element spaces are given by 
\begin{align*} 
X_N:=\left\{u \in W_0^{1,\infty}(\Omega):u|_K \in \mathbb{P}_1(K) \ \text{for all} \ K \in \mathcal{T}_N\right\}, 
\end{align*}
where $\mathbb{P}_1(K)$ signifies the set of all polynomials of degree at most one on $K$.

\begin{remark}
We note that in the given $P1$-FEM setting, the assumption~\eqref{eq:density} is satisfied if the mesh size of $\mathcal{T}_N$ goes to 0.
\end{remark}

We will consider the two model equations  
\begin{align} \label{eq:modelproblem}
\text{(MEQ.~i)} \quad \int_{\Omega_i} |\nabla u(\x)|^{p_i(\x)-2} \nabla u(\x) \cdot \nabla v(\x) \dd \x=\int f_i(\x) v(\x) \dd \x \quad \text{for all} \ v \in W_0^{1,p_i(\x)}(\Omega_i),
\end{align}
where
\begin{itemize}
\item $\Omega_1=(-1,1)^2 \subset \mathbb{R}^2$ and $p_1(x,y)=2.3+0.5x+0.5y$,
\item $\Omega_2=(0,1)^2 \subset \mathbb{R}^2$ and $p_2(x,y)=1.2+2(x^2+y^2)$;
\end{itemize}
here, $\x=(x,y) \in \mathbb{R}^2$ denote the Euclidean coordinates. In both cases, the source $f_i: \Omega \to \mathbb{R}$ is chosen in such a way that the exact solution of~\eqref{eq:modelproblem} is given by $u^\star(x,y)=\sin(\pi x) \sin(\pi y)$. We note that the variable exponent $p_i$, for $i \in \{1,2\}$, attains values below and above the threshold value $p=2$. Furthermore, the damping parameter for the Ka\v{c}anov scheme was chosen (fixed and smaller than one) in such a way that the iteration scheme converged, but without fine tuning. 

\begin{experiment}[Convergence of the damped Ka\v{c}anov scheme] \label{exp:kacanov}
In our first experiment, we will investigate the convergence of the damped Ka\v{c}anov iteration for the discrete, relaxed problem corresponding to~\eqref{eq:modelproblem}, for $i \in \{1,2\}$. Thereby, the discretisation is based on a triangular mesh with $\mathcal{O}(10^5)$ elements and the relaxation parameters are given by $\epsm=10^{-6}$ and $\epsp=10^6$. In both cases, the discrete, relaxed solution, which we will refer to as the reference solution $u_{\mathrm{ref},i}$, was approximated by 300 iteration steps of the damped Ka\v{c}anov scheme~\eqref{eq:kacanov} with the initial guess being the linear interpolant of the exact solution (of the unrelaxed problem) in the element nodes. Subsequently, we examined the decay of the error $\nnnn{\nabla(u_{\mathrm{ref},i}-u_i^n)}$, where $u_i^n$ is the $n$th iterate of the damped Ka\v{c}anov scheme~\eqref{eq:kacanov} for the discrete, relaxed version of (MEQ.~i), $i \in \{1,2\}$. The initial guess was chosen to be $u_1^0 \equiv 0$ and $u_2^0=\sin(\pi x y)$, or more precisely the linear interpolant in the mesh nodes, respectively. We emphasise that the initial guess in the former case is very unconsidered, as this leads to $\mu(\x,|\nabla u_1^0|^2) = \epsm^{p_1(\x)-2}$, which, depending on $\x \in \Omega$, can be extremely small or large. In turn, the same holds true for the entries of the matrix corresponding to the bilinear form $\A_\epsilon[u_1^0]$. This is, most probably, the reason why the first iteration step leads to an enormous error $\nnnn{\nabla(u_{\mathrm{ref},1}-u_1^1)}$. Nonetheless, after the initial step, we have a nice decay of the error, see Figure~\ref{fig:kacanov} (left). Also in the second case, i.e., for $i=2$, we have a neat convergence of the error to zero for an increasing number of iteration steps, see Figure~\ref{fig:kacanov} (right). Moreover, thanks to a wiser choice of the initial guess, we have a much smaller error in the initial phase compared to before. 

\begin{figure}[ht]
{\includegraphics[width=0.48\textwidth]{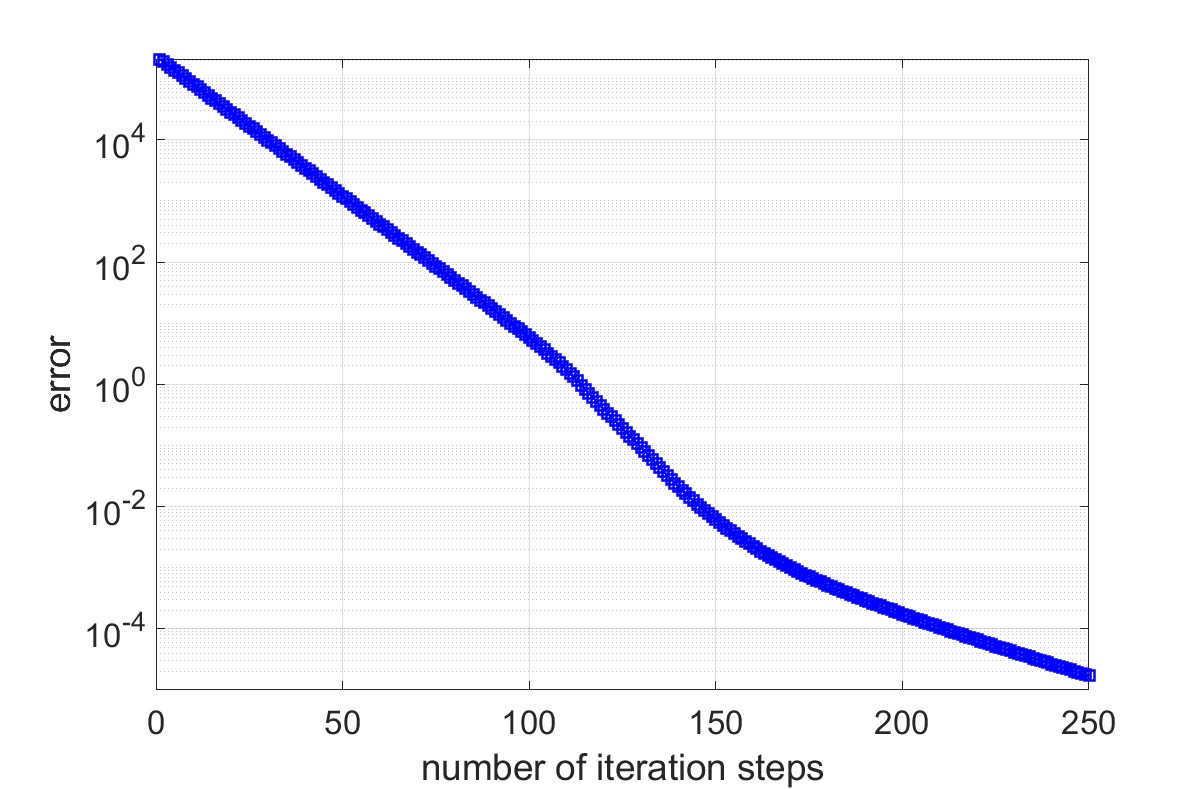}}\hfill
{\includegraphics[width=0.48\textwidth]{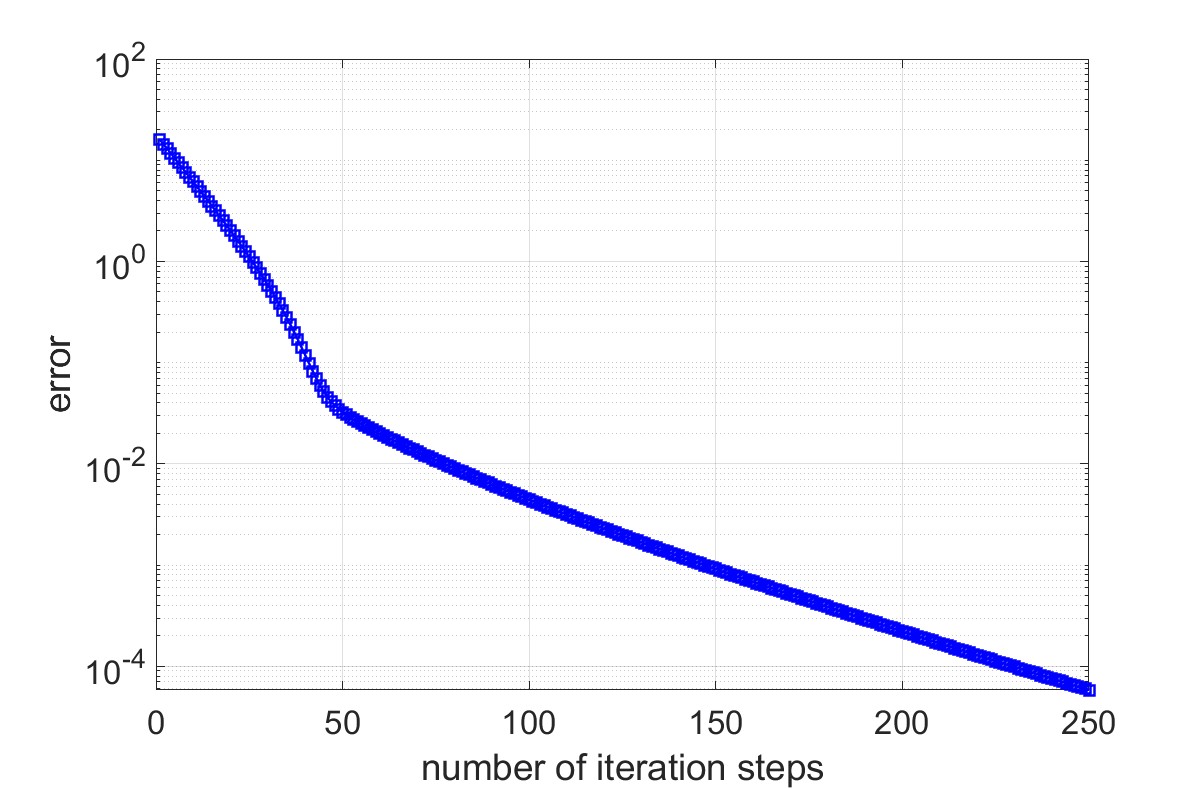}}
 \caption{Experiment~\ref{exp:kacanov}. Convergence of the damped Ka\v{c}anov scheme for the discrete, relaxed problem corresponding to (MEQ.~1) (left) and (MEQ.~2) (right), respectively.}\label{fig:kacanov}
\end{figure}
\end{experiment}

\begin{experiment}[Convergence with respect to the relaxation parameters] \label{exp:relaxation}
Next, we shall examine the convergence of the solution of the discrete, relaxed problem to the solution of discrete, unrelaxed problem. We use the same finite element space as before, and, in addition, we will consider the reference solutions from the previous experiment as our reference solutions in the given experiment. Here, for $i \in \{1,2\}$, we first compute approximations $u_{k,i}$ of the solutions of the relaxed, discrete versions of~\eqref{eq:modelproblem} for the relaxation parameters $\epsilon_{\pm}=1.4^{\pm k}$. For that purpose, for fixed $k$, we apply the damped Ka\v{c}anov scheme and stop our calculation as soon as the error, measured in the $\hs$-norm, of two consecutive iterates drops below $10^{-10}$. Subsequently, we plot the error $\nnnn{\nabla(u_{i,k}-u_{\mathrm{ref}})}$ against the exponent $k$ of the relaxation parameters. As predicted by our theory (cf.~Theorem~\ref{thm:relaxationconvergence}) --- albeit we consider here a stronger notion of convergence ---  the error nicely decays for an increasing $k$, see Figure~\ref{fig:relaxation}.

\begin{figure}[ht] 
{\includegraphics[width=0.48\textwidth]{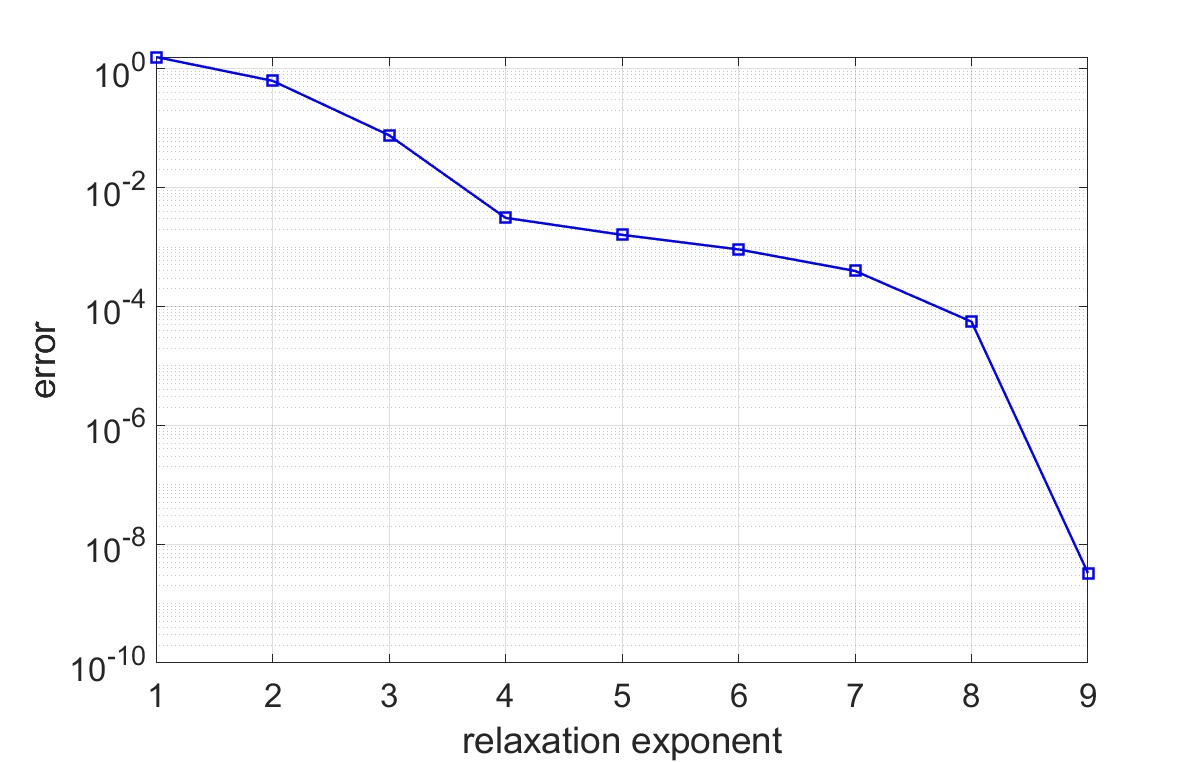}}\hfill
{\includegraphics[width=0.48\textwidth]{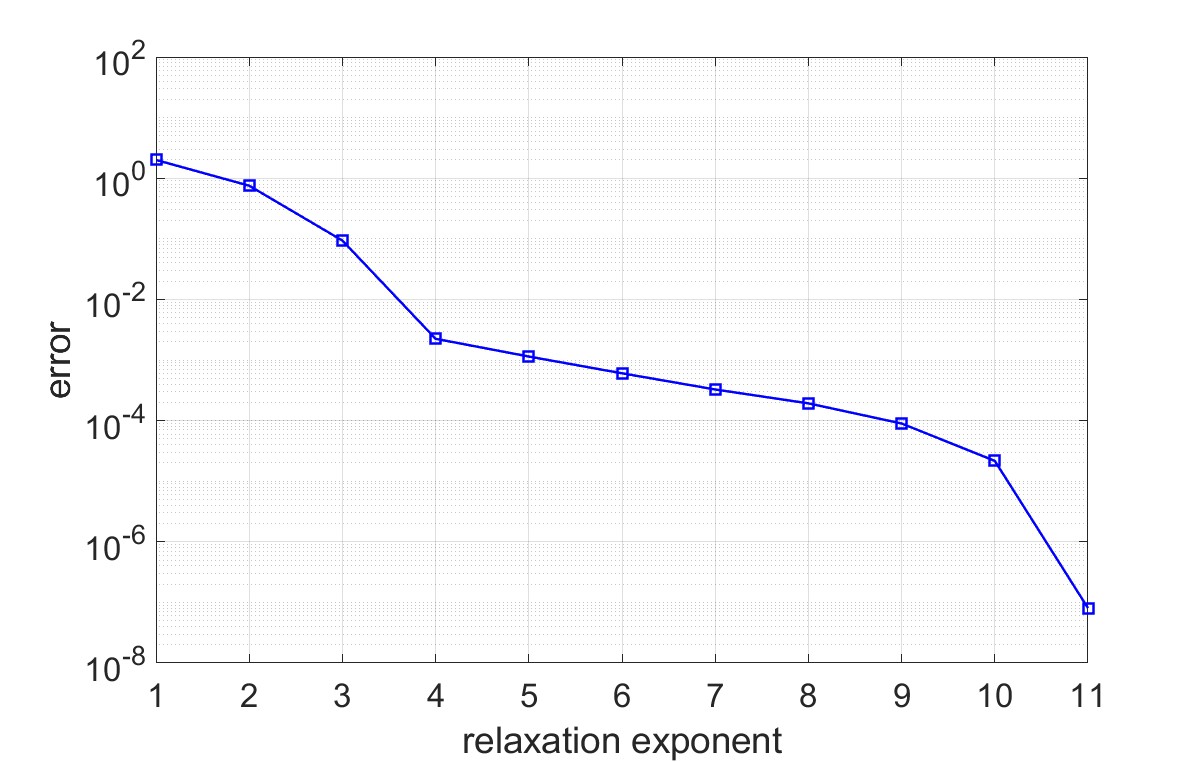}}
 \caption{Experiment~\ref{exp:relaxation}. Convergence with respect to the continuation parameters $\epsilon_{\pm}=1.4^{\pm k}$ for our model problem (MEQ.~1) (left) and (MEQ.~2) (right), respectively.}\label{fig:relaxation}
\end{figure}
\end{experiment}

\begin{experiment}[Convergence with respect to the mesh size] \label{exp:mesh}
In our last experiment, we are interested in the convergence with respect to the mesh size. For each given finite element space $X_N$, we approximate the solution $u_N^\star$ of the discrete, unrelaxed problem by applying the damped Ka\v{c}anov scheme, with the same stopping strategy as before, for the relaxed problem with relaxation parameters $\epsilon_{\pm}=10^{-6}$. We start this experiment with a coarse, uniform mesh $\mathcal{T}_0$, and employ a uniform mesh refinement to obtain $\mathcal{T}_{N+1}$ from $\mathcal{T}_N$. In Figure~\ref{fig:mesh} we depict the convergence of the error $\nnnn{\nabla(u_N^\star-u^\star)}$ against the number of elements in the mesh $\mathcal{T}_N$; here, we indeed consider the exact solution $u^\star(x,y)=\sin(\pi x) \sin(\pi y)$ of our model problem (MEQ.~i), for $i \in \{1,2\}$; cf.~\eqref{eq:modelproblem}. As can be observed in Figure~\ref{fig:mesh}, we have a linear decay of the error in both cases. 

\begin{figure}[ht] 
{\includegraphics[width=0.48\textwidth]{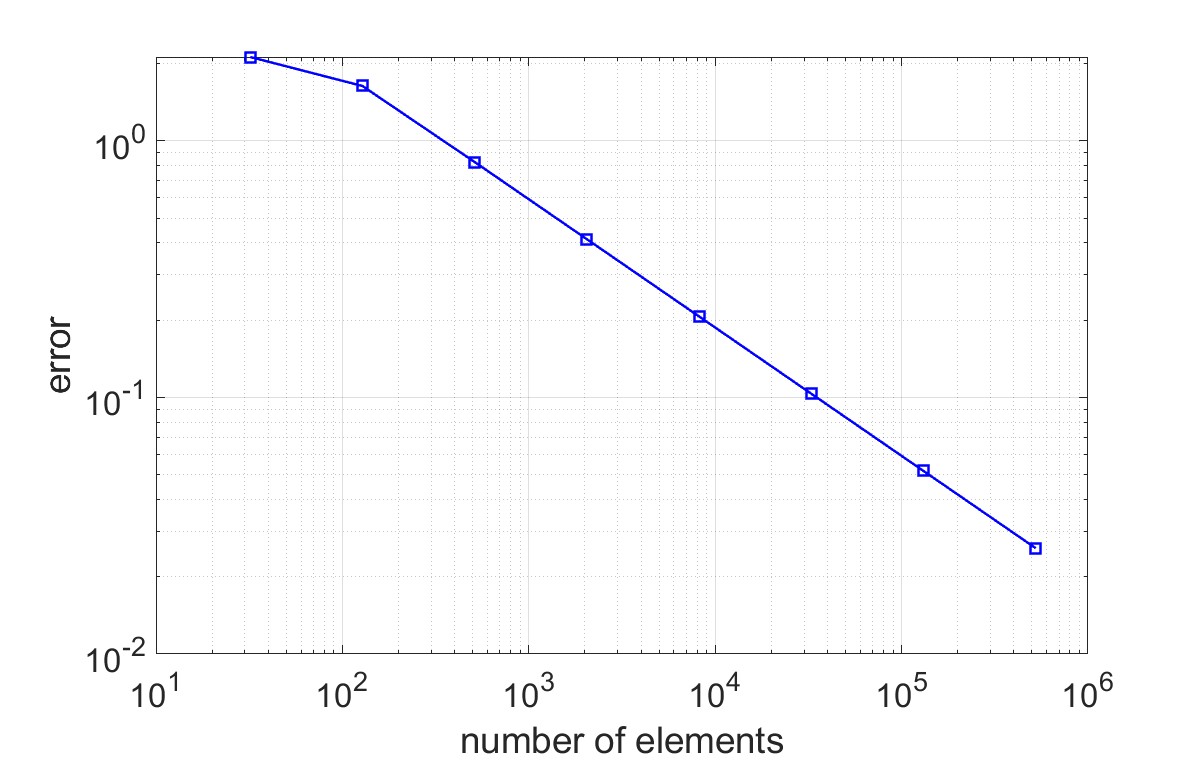}}\hfill
{\includegraphics[width=0.48\textwidth]{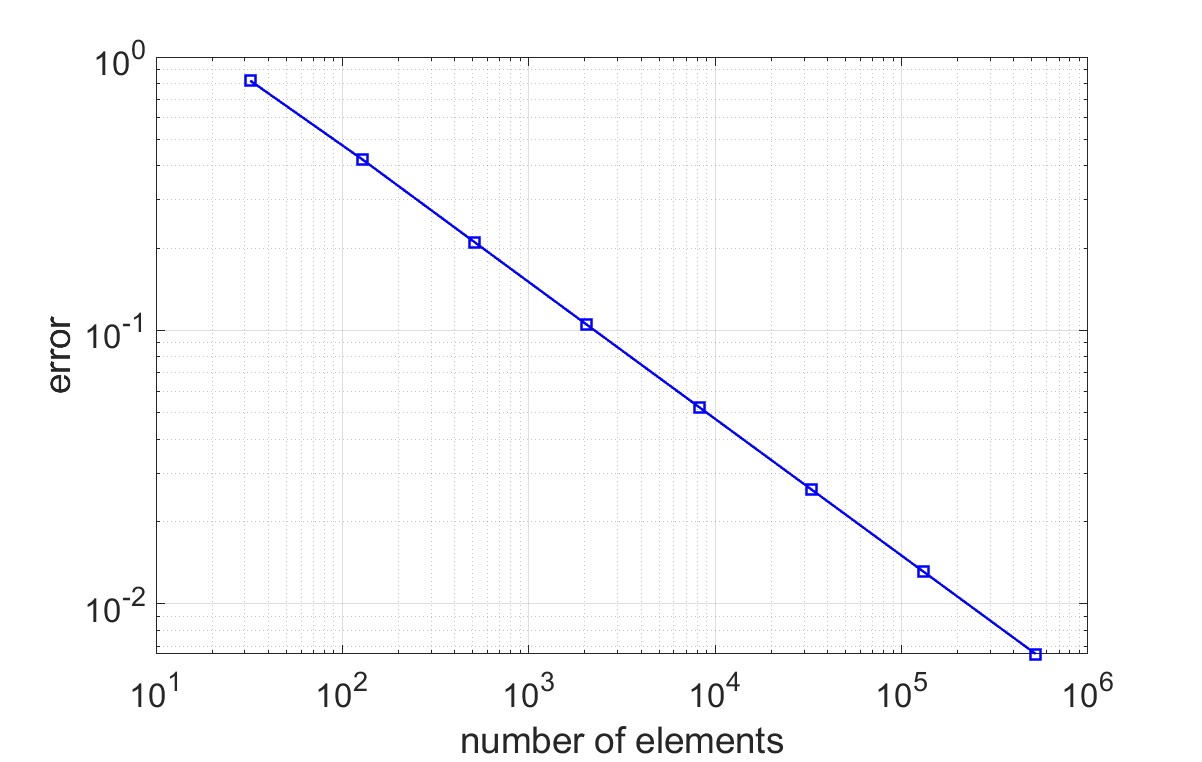}}
 \caption{Experiment~\ref{exp:mesh}. Convergence with respect to the number of elements in the mesh. Left for our model problem (MEQ.~1) and right for (MEQ.~2), respectively.}\label{fig:mesh}
\end{figure}
\end{experiment}

\section{Conclusions}
In this work, we studied some convergence properties of a discrete, relaxed $p(x)$-Poisson equation. First of all, we devised an iteration scheme that generates a sequence converging to a solution of the relaxed problem --- in the discrete as well as in the continuous case. Subsequently, we showed that on finite dimensional subspaces the solution of the relaxed problem converges to the solution of the original, unrelaxed equation. Finally, under suitable assumptions on the sequence of discrete spaces, we derived the convergence of the discrete solution to the continuous one. Admittedly, we have not constructed a computable sequence that converges to the solution of the continuous, unrelaxed $p(x)$-Poisson equation. However, this will be subject to a future research work. In particular, we want to design an algorithm that employs an adaptive inteplay of the damped Ka\v{c}anov iteration scheme, an enlargement of the relaxation parameter, and an hierarchical enrichment of the discrete spaces, and which generates a computable sequence with guaranteed convergence to the sought solution.    

\bibliographystyle{amsplain}
\bibliography{references}
\end{document}